\newcommand{\tuple}[1]{\ensuremath{\langle{#1}\rangle}}
\newcommand{\eq}{\approx}
\newcommand{\N}{\ensuremath{\mathbb{N}}}
\newcommand{\defiff}{:\Longleftrightarrow}
\newcommand{\ops}{\ensuremath{\mathop{\star}}}
\newcommand{\lang}{\mathcal{L}}
\newcommand{\langs}{\mathcal{L}_s}
\newcommand{\model}[1]{{\mathfrak{{#1}}}}
\newcommand{\semvalue}[1]{\ensuremath{\left\|{#1}\right\|}}
\newcommand{\all}{\ensuremath{\forall}}
\newcommand{\exi}{\ensuremath{\exists}}
\newcommand{\All}[1]{\ensuremath{(\all{#1})}}
\newcommand{\Exi}[1]{\ensuremath{(\exi{#1})}}
\newcommand{\K}{\cls{K}}
\newcommand{\V}{\cls{V}}
\newcommand{\mK}{\cls{mK}}
\newcommand{\mV}{\cls{mV}}
\newcommand{\f}{\ensuremath{\varphi}}
\newcommand{\p}{\ensuremath{\psi}}
\newcommand{\x}{\ensuremath{\chi}}
\newcommand{\ep}{\ensuremath{\varepsilon}}
\newcommand{\pd}{\cdot}
\newcommand{\zr}{{\rm f}}
\newcommand{\ut}{{\rm e}}
\newcommand{\mt}{\land}
\newcommand{\jn}{\lor}
\newcommand{\mathrmL}{{\mathchoice{\mbox{\rm\L}}{\mbox{\rm\L}}{\mbox{\rm\scriptsize\L}}{\mbox{\rm\tiny\L}}}}
\renewcommand{\a}{\ensuremath{\alpha}}
\renewcommand{\b}{\ensuremath{\beta}}
\newcommand{\bo}{\ensuremath{\Box}}
\newcommand{\di}{\Diamond}
\newcommand{\mfram}[1]{\mathcal{#1}}
\newcommand{\De}{\mathrm{\Delta}}
\newcommand{\Ga}{\mathrm{\Gamma}}
\newcommand{\Si}{\mathrm{\Sigma}}
\newcommand{\fosc}[1]{\vDash^{\forall}_{#1}}
\newcommand{\mdl}[1]{\vDash_{#1}}
\newcommand{\nmdl}[1]{\not\vDash_{#1}}
\newcommand{\der}[1]{\vdash_{_\lgc{#1}}}
\newcommand{\lgc}[1]{\mathrm{#1}}
\newcommand{\alg}[1]{\mathbf{#1}}
\newcommand{\cls}[1]{\mathsf{#1}}
\newcommand*{\pfa}[1]{\mbox{\footnotesize $#1$}} 	 
\newcommand{\mfml}{\ensuremath{{\rm Fm}_{\bo}}}
\newcommand{\ofml}{\ensuremath{{\rm Fm}_{\forall}^{1}}}
\newcommand{\ofmls}{\ensuremath{{\rm Fm}_{\forall}^{1+}}}
\newcommand*{\md}{{\rm md}}
\newcommand*{\height}{{\rm ht}}
\newcommand*{\fFLe}{\lgc{\all1FL_e}}
\newcommand*{\seq}{{\vphantom{A}\Rightarrow{\vphantom{A}}}}
\newcommand*{\rseq}{\Rightarrow}
\newcommand{\idr}{(\textsc{id})}
\newcommand{\flr}{(\zr\!\rseq)}
\newcommand{\frr}{(\rseq\!\zr)}
\newcommand{\tlr}{(\ut\!\rseq)}
\newcommand{\trr}{(\rseq\!\ut)}
\newcommand{\olr}{(\jn\!\rseq)}
\newcommand{\orr}{(\rseq\!\jn)}
\newcommand{\alr}{({\mt\!\rseq})}
\newcommand{\arr}{({\rseq\!\mt})}
\newcommand{\falr}{({\all\!\rseq})}
\newcommand{\farr}{(\rseq\!\all)}
\newcommand{\elr}{(\exi\!\rseq)}
\newcommand{\err}{(\rseq\!\exi)}
\newcommand{\pdlr}{(\pd\!\rseq)}
\newcommand{\pdrr}{(\rseq\!\pd)}
\newcommand{\ilr}{(\to\rseq)}
\newcommand{\irr}{(\rseq\to)}
\newcommand{\usecolor}[2]{{\color{#1} #2}}
\newcommand{\petr}[1]{\usecolor{red}{#1}}
\title[One-variable fragments of first-order logics]
{One-variable fragments of first-order logics}
\keywords{First-Order Logic, One-Variable Fragment, Modal Logic, Substructural Logic, Superamalgamation, Sequent Calculus.}
\author{Petr Cintula}
\address{Institute of Computer Science\\
Czech Academy of Sciences\\
Prague, Czech Republic}
\email{cintula@cs.cas.cz}
\author{George Metcalfe}
\address{Mathematical Institute\\
University of Bern\\ 
Bern, Switzerland}
\email{george.metcalfe@unibe.ch}
\author{Naomi Tokuda}
\address{Mathematical Institute\\
University of Bern\\
Bern, Switzerland}
\email{naomi.tokuda@unibe.ch}
\thanks{The first author was supported by RVO 67985807 and Czech Science Foundation grant GA22-01137S, and the second two authors by Swiss National Science Foundation grant 200021\textunderscore 215157. This project has also received funding from the European Union’s Horizon 2020 research and innovation programme under the Marie Sk{\l}odowska-Curie grant agreement No 101007627. }
\theoremstyle{definition}
\newtheorem{theorem}{Theorem}
\newtheorem{lemma}[theorem]{Lemma}
\newtheorem*{claim*}{Claim}
\newtheorem{corollary}[theorem]{Corollary}
\newtheorem{proposition}[theorem]{Proposition}
\newtheorem{example}[theorem]{Example}
\newtheorem{named-remark}[theorem]{}
\numberwithin{theorem}{section}
\begin{document}

\begin{abstract}
The one-variable fragment of a first-order logic may be viewed as an ``S5-like'' modal logic, where the universal and existential quantifiers are replaced by box and diamond modalities, respectively. Axiomatizations of these modal logics have been obtained for special cases --- notably, the modal counterparts $\lgc{S5}$ and $\lgc{MIPC}$ of the one-variable fragments of first-order classical logic and intuitionistic logic --- but a general approach, extending beyond first-order intermediate logics, has been lacking. To this end, a sufficient criterion is given in this paper for the one-variable fragment of a semantically-defined first-order logic --- spanning families of intermediate, substructural, many-valued, and modal logics --- to admit a natural axiomatization. More precisely, such an axiomatization is obtained for the one-variable fragment of any first-order logic based on a variety of algebraic structures with a lattice reduct that has the superamalgamation property, building on a generalized version of a functional representation theorem  for monadic Heyting algebras due to Bezhanishvili and Harding.  An alternative proof-theoretic strategy for obtaining such axiomatization results is also developed for first-order substructural logics that have a cut-free sequent calculus and admit a certain interpolation property.
\end{abstract}

%%%%%%%%%%%%%%%%%%%%%%%%%%%%%%%%%%%%%%%%%%

\maketitle

%%%%%%%%%%%%%%%%%%%%%%%%%%%%%%%%%%%%%%%%%%

\vspace{-3ex} \petr{}

\section{Introduction}\label{s:introduction}

The one-variable fragment of any standard first-order logic --- intermediate, substructural, many-valued, modal,  or otherwise --- consists of consequences in the logic constructed using one distinguished variable $x$, unary relation symbols, propositional connectives, and the quantifiers $\All{x}$ and $\Exi{x}$. Such a fragment may be conveniently reformulated as a propositional modal logic by replacing occurrences of an atom $P(x)$ with a propositional variable $p$, and occurrences of $\All{x}$ and $\Exi{x}$ with $\bo$ and $\di$, respectively. Typically, this modal logic is algebraizable --- that is, it enjoys soundness and completeness with respect to some suitable class of algebraic structures -- and hence, unlike the full first-order logic, can be studied using the tools of universal algebra.

Any standard semantics for a first-order logic, where quantifiers range over domains of models, yields a relational semantics for the one-variable fragment. On the other hand, a Hilbert-style axiomatization does not (at least directly) yield an axiomatization for the fragment, since a derivation of a one-variable formula may involve additional variables. Axiomatizations are well known for the modal counterparts $\lgc{S5}$~\cite{Hal55} and $\lgc{MIPC}$~\cite{MV57,Bul66} of the one-variable fragments of first-order classical logic and intuitionistic logic, respectively, and similar results have been obtained for modal counterparts of one-variable fragments of other first-order intermediate logics~\cite{OS88,Suz89,Suz90,Bez98,BH02,CR15,CMRR17,CMRT22} and many-valued logics~\cite{Rut59,dNG04,CCVR20,MT20}. However, a general approach to axiomatizing one-variable fragments of first-order logics has, until now, been lacking.\footnote{A precursor to this paper, reporting preliminary results restricted to a smaller class of logics, was published in the proceedings of AiML~2022~\cite{CMT22}.}

In this paper, we address the aforementioned axiomatization problem for a broad family of semantically-defined first-order logics. First, in Section~\ref{s:one-variable}, we introduce (one-variable) first-order logics based on models defined over classes of {\em $\lang$-lattices}: structures for an algebraic signature $\lang$ that have a lattice reduct.  In particular, first-order intermediate and substructural logics can be defined over classes of Heyting algebras and $\cls{FL_e}$-algebras, respectively. For the sake of generality (e.g., when $\lang$-lattices are just lattices), consequence is defined over equations between first-order formulas; however, this often --- in particular, for intermediate and substructural logics --- corresponds to the usual notion of consequence between formulas. 

In Section~\ref{s:algebraic}, we introduce potential axiomatizations for consequence in the modal counterparts of the one-variable fragments of these  semantically-defined first-order logics. We define an {\em m-$\lang$-lattice} to be an $\lang$-lattice expanded with modalities $\bo$ and $\di$ satisfying certain equations familiar from modal logic, and given any class $\K$ of $\lang$-lattices, let $\mK$ denote the class of m-$\lang$-lattices with an $\lang$-lattice reduct in $\K$. For example, if $\K$ is a variety of Heyting algebras, then $\mK$ is a variety of monadic Heyting algebras in the sense of~\cite{MV57}. We then show that m-$\lang$-lattices are in one-to-one correspondence with $\lang$-lattices equipped with a subalgebra satisfying a relative completeness condition,  generalizing previous results in the literature (see, e.g.,~\cite{Bez98,Tuy21}). We also show that if $\K$ is any class  of $\lang$-lattices closed under taking subalgebras and direct powers (in particular, any variety), then consequence in the one-variable fragment of the first-order logic defined over $\K$ corresponds to consequence in the {\em functional} members of $\mK$: m-$\lang$-lattices consisting of functions from a set $W$ to an $\lang$-lattice $\alg{A}\in\K$.

In Section~\ref{s:functional}, we close the circle, obtaining an axiomatization of consequence in the one-variable fragment of any first-order logic defined over a variety of $\lang$-lattices that has the {\em superamalgamation property}: a well-studied algebraic property equivalent in some cases to Craig interpolation for the associated logic. That is, we show that for such a variety $\V$, every member of $\mV$ is functional --- generalizing Bezhanishvili and Harding's representation theorem for monadic Heyting algebras~\cite{BH02} --- and hence that the defining equations for $\mV$ provide the desired axiomatization. As a consequence, we obtain axiomatizations of the one-variable fragments of a broad range of first-order logics, including the seven consistent first-order intermediate logics admitting Craig interpolation, first-order extensions of substructural logics such as $\cls{FL_e}$, $\cls{FL_{ew}}$, and $\cls{FL_{ec}}$, a first-order lattice logic, and a first-order version of the modal logic $\lgc{K}$.
 
In Section~\ref{s:prooftheory}, we present an alternative proof-theoretic strategy for establishing completeness of an axiomatization for the one-variable fragment of a first-order logic, the key idea being to show that additional variables can be eliminated from derivations of one-variable formulas in a suitable sequent calculus. As a concrete example, we obtain a new completeness proof for the one-variable fragment of the first-order version of the substructural logic $\lgc{FL_e}$ by establishing an interpolation property for derivations in a cut-free sequent calculus. We then explain how the proof generalizes to a family of first-order substructural logics, including  $\lgc{FL_{ew}}$,  $\lgc{FL_{ec}}$, and $\lgc{FL_{ewc}}$ (intuitionistic logic). Finally, in Section~\ref{s:concluding}, we discuss the limitations of the methods described in the paper and potential extensions to broader families of first-order logics.

%%%%%%%%%%%%%%%%%%%%%%%%%%%%%%%%%%%%%%%%%%

\section{A family of first-order logics}\label{s:one-variable}

Let $\lang$ be any algebraic signature, and let $\lang_n$ denote the set of operation symbols of $\lang$ of arity $n\in\N$. We will assume throughout this paper that $\lang_2$ contains distinct symbols $\mt$ and $\jn$, referring to such a signature as {\em lattice-oriented}.

We call an algebraic structure $\alg{A} = \tuple{A,\{\ops^{\alg{A}}\mid n\in\N,\,\ops\in\lang_n\}}$ an {\em $\lang$-lattice} if $\ops^{\alg{A}}$ is an $n$-ary operation on $A$ for each $\ops\in\lang_n$ ($n\in\N$), and $\tuple{A,\mt^{\alg{A}},\jn^{\alg{A}}}$ is a lattice with respect to the induced order $x\leq^{\alg{A}}y \defiff x \mt^{\alg{A}} y = x$. As usual, superscripts will be omitted when these are clear from the context.

\begin{example}\label{e:substructural}
Let $\lang_s$ be the lattice-oriented signature with binary operation symbols $\mt$, $\jn$, $\pd$, and $\to$, and constant symbols $\zr$ and $\ut$. An {\em $\cls{FL_e}$-algebra} --- also referred to as a {\em commutative pointed residuated lattice} --- is an $\lang_s$-lattice $\alg{A}=\tuple{A,\mt,\jn,\pd,\to,\zr,\ut}$ such that $\tuple{A,\cdot,\ut}$ is a commutative monoid and $\to$ is the residuum of $\pd$, that is, $a\pd b\leq c\iff a\leq b\to c$, for all $a,b,c\in~A$. The class of $\cls{FL_e}$-algebras forms a variety $\cls{FL_e}$ that provides algebraic semantics for the full Lambek calculus with exchange $\lgc{FL_e}$ --- also known as multiplicative additive intuitionistic linear logic without additive constants  (see, e.g.,~\cite{GJKO07,MPT23}). Algebraic semantics for other well-known substructural logics are provided by various subvarieties of $\cls{FL_e}$; in particular,
\begin{enumerate}[font=\upshape, label={$\bullet$}, topsep=2mm,  itemsep=2mm]

\item the full Lambek calculus with exchange and weakening $\lgc{FL_{ew}}$, and full Lambek calculus with exchange and contraction $\lgc{FL_{ec}}$, correspond to  the varieties $\cls{FL_{ew}}$ and $\cls{FL_{ec}}$ of $\cls{FL_e}$-algebras satisfying the equations $\zr\leq x\leq\ut$, and $x\leq x\pd x$, respectively;

\item intuitionistic logic $\lgc{IL}$ corresponds to the variety $\cls{HA}$ of Heyting algebras, term-equivalent to $\cls{FL_{ewc}}=\cls{FL_{ew}}\cap\cls{FL_{ec}}$  (just identify $\pd$ and $\mt$);

\item classical logic $\lgc{CL}$ and G{\"o}del logic $\lgc{G}$ correspond to the varieties $\cls{BA}$ of Boolean algebras, and  $\cls{GA}$ of G{\"o}del algebras, axiomatized relative to $\cls{HA}$ by the equations $(x\to\zr)\to\zr\eq x$ and $(x\to y)\jn(y\to x)\eq\ut$, respectively;

\item  {\L}ukasiewicz logic $\lgc{\mathrmL}$ corresponds to the variety $\cls{MV}$ of MV-algebras, term-equivalent to the variety of $\cls{FL_{ew}}$-algebras satisfying $(x\to y)\to y\eq x\jn y$.

\end{enumerate}
\end{example}

Full first-order logics can be defined over an arbitrary predicate language with formulas built using propositional connectives in the algebraic signature~$\lang$  (see,~e.g.,~\cite[Section~7.1]{CN21}). However, for the purposes of this paper it suffices to restrict our attention to the one-variable setting with a fixed (generic) predicate language. Let $\ofml(\lang)$ denote the set of {\em one-variable $\lang$-formulas} $\f,\p,\dots$, built  inductively as usual from a countably infinite set of unary predicates $\{P_i\}_{i\in\N}$, a distinguished variable $x$, connectives in $\lang$, and quantifiers $\all,\exi$. We also call an ordered pair of one-variable $\lang$-formulas $\f,\p\in\ofml(\lang)$, written $\f\eq\p$, an {\em $\ofml(\lang)$-equation}, and let $\f\leq\p$ denote $\f\mt\p\eq\f$.\footnote{Let us emphasize that an $\ofml(\lang)$-equation $\f\eq\p$ is a primitive syntactic object that relates two formulas and not terms. In some settings (e.g., first-order substructural logics), $\f\eq\p$ can be replaced by a formula such as $\f \leftrightarrow \p$ and semantical consequence can be defined between formulas, but this is not always the case.}

Now let $\alg{A}$ be any $\lang$-lattice, let $S$ be a non-empty set, and let $\mfram{I}(P_i)$ be a map from $S$ to $A$ for each $i\in\N$, writing $u\mapsto f(u)$ to denote a map assigning to each $u\in S$ some $f(u)\in A$. We call the ordered pair $\model{S}=\tuple{S,\mfram{I}}$ an {\em $\alg{A}$-structure} if the following inductively defined partial map $\semvalue{\cdot}^\model{S}\colon\ofml(\lang)\to A^S$ is total:
\begin{align*}
\semvalue{P_i(x)}^\model{S} &= \mathcal{I}(P_i)\quad && i\in\N\\
\semvalue{\ops(\f_1,\dots,\f_n)}^\model{S}
&=u\mapsto{\ops}^\alg{A}\big(\semvalue{\f_1}^\model{S}(u)\,\dots ,\,\semvalue{\f_n}^\model{S}(u)\big)\quad && n\in\N,\ops\in\lang_n\\
\semvalue{\All{x}\f}^\model{S} &= u\mapsto\bigwedge\big\{\semvalue{\f}^{\model{S}}(v) \mid v\in S\big\}\\
\semvalue{\Exi{x}\f}^\model{S} &= u\mapsto\bigvee\big\{\semvalue{\f}^{\model{S}}(v) \mid v\in S\big\}.
\end{align*}
If $\alg{A}$ is {\em complete} --- that is, $\bigwedge X$ and $\bigvee X$ exist in $A$, for all $X \subseteq A$ --- then $\model{S}=\tuple{S,\mfram{I}}$ is always an $\alg{A}$-structure; otherwise, whether or not the partial map $\semvalue{\cdot}^\model{S}$ is total depends on $\mfram{I}$. E.g., for $\alg{A}=\tuple{\mathbb{N},\min,\max}$ and $S=\N$, if $\mfram{I}(P_0)(n)\coloneqq n$, for all $n\in\N$, then $\semvalue{\Exi{x}P_0(x)}^\model{S}$ is undefined, but if $\mfram{I}(P_i)(n)\le K$ for all $i\in\N$ and $n\in S$, for some fixed $K\in\N$, then  $\model{S}$ is an $\alg{A}$-structure.

We say that an $\ofml(\lang)$-equation $\f\eq\p$ is {\em valid} in an $\alg{A}$-structure $\model{S}$, and write $\model{S}\models\f\eq\p$, if $\semvalue{\f}^\model{S}=\semvalue{\p}^\model{S}$. More generally, consider any class of $\lang$-lattices $\K$. We say that an $\ofml(\lang)$-equation $\f\eq\p$ is a {\em (sentential) semantical consequence}  of a set of $\ofml(\lang)$-equations $T$ in $\K$, and write $T\fosc{\K}\f\eq\p$, if for any $\alg{A}\in\K$ and $\alg{A}$-structure $\model{S}$,
\begin{align*}
\model{S}\models\f'\eq\p'\text{, for all }\f'\eq\p' \in T\enspace\Longrightarrow\enspace\model{S}\models\f\eq\p.
\end{align*}
In certain cases, we can restrict attention to the complete members of $\K$. Let us say that $\K$ {\em admits regular completions} if, for any $\alg{A}\in\K$, there exists an $\lang$-lattice embedding $h$ of $\alg{A}$ into a complete member $\alg{B}$ of $\K$ that preserves all existing meets and joins, noting that for any $\alg{A}$-structure $\model{S}=\tuple{S,\mfram{I}}$, the $\alg{B}$-structure $\model{S}^h=\tuple{S,\mfram{I}^h}$, with $\mfram{I}^h(P_i)\coloneqq h\circ\mfram{I}(P_i)$ for each $i\in I$, satisfies $\semvalue{\f}^{\model{S}^h}=h\circ\semvalue{\f}^{\model{S}}$ for each  $\varphi\in\ofml(\lang)$.  Clearly, semantical consequence in such a class $\K$ coincides with semantical consequence in the class of complete members of $\K$.

\begin{example}
A sufficient, but by no means necessary, condition for a class of $\lang$-lattices to admit regular completions is closure under MacNeille completions (see, e.g.,~\cite{Har08}). This is the case in particular for $\cls{BA}$ and $\cls{HA}$; indeed, they are the only non-trivial varieties of Heyting algebras that have this property~\cite{BH04}. A broad family of varieties of $\cls{FL_e}$-algebras --- including $\cls{FL_e}$, $\cls{FL_{ew}}$, and $\cls{FL_{ec}}$ --- are also closed under MacNeille completions (see,~e.g.,~\cite{CGT12}), and for a still broader family --- including  $\cls{GA}$ --- this is true for the class of their subdirectly irreducible members~\cite{CGT11}. Note, however, that in some cases --- e.g.,  $\cls{MV}$~\cite{GP02} --- neither the variety nor the class of its subdirectly irreducible members admits regular completions.
\end{example}

Next, let us denote by $\mfml(\lang)$ the set of propositional formulas $\a,\b,\dots$ built inductively as usual from a countably infinite set of propositional variables $\{p_i\}_{i\in\N}$, connectives in $\lang$, and unary connectives $\bo$ and $\di$, and call an ordered pair of formulas $\a,\b\in\mfml(\lang)$, written $\a\eq\b$,  an {\em $\mfml(\lang)$-equation}. The (standard) translation functions $({-})^\ast$ and $({-})^\circ$ between $\ofml(\lang)$ and $\mfml(\lang)$ are defined inductively by
\begin{align*}
(P_i(x))^\ast			&= p_i						 & p_i^\circ		&= P_i(x) & i\in\N\\
(\ops(\f_1,\dots,\f_n))^\ast	&= \ops(\f^\ast_1,\dots,\f^\ast_n)	\ \   & (\ops(\a_1,\dots,\a_n))^\circ		&= \ops(\a^\circ_1,\dots,\a^\circ_n) & \ops\in\lang_n\\
(\All{x} \f)^\ast			&=\bo \f^\ast					& (\bo\a)^\circ	&= \All{x} \a^\circ \\
(\Exi{x} \f)^\ast			&=\di \f^\ast					& (\di\a)^\circ	&= \Exi{x} \a^\circ,
\end{align*}
and lift in the obvious way to (sets of) $\ofml(\lang)$-equations and $\mfml(\lang)$-equations. 

Clearly, $(\f^\ast)^\circ = \f$ for any $\f\in\ofml(\lang)$ and $(\a^\circ)^\ast = \a$ for any $\a\in\mfml(\lang)$, and we may therefore switch between first-order and modal notations as convenient. Indeed, to achieve our goal of axiomatizing consequence in the one-variable first-order logic based on a class of $\lang$-lattices $\K$, it suffices to find a (natural) axiomatization of a variety $\V$ of algebras in the signature of $\lang$ expanded with $\bo,\di$ such that $\fosc{\K}$ corresponds to equational consequence in~$\V$.  More precisely, let us call a homomorphism from the formula algebra with universe $\mfml(\lang)$ to $\alg{A}\in\V$ an {\em $\alg{A}$-evaluation}, and define for any set $\Si\cup\{\a\eq\b\}$ of $\mfml(\lang)$-equations,
\begin{align*}
\Si\mdl{\V} \a\eq\b\:\defiff \enspace
& \text{$f(\a)=f(\b)$, for every $\alg{A}\in\V$ and $\alg{A}$-evaluation $f$}\\
&   \text{satisfying $f(\a')=f(\b')$ for all $\a'\eq\b'\in\Si$.}
\end{align*}
Our goal is to provide a  (natural) axiomatization of a variety $\V$ such that for any set of $\ofml(\lang)$-equations $T\cup\{\f\eq\p\}$,
\begin{align*}
T\fosc{\K}\f\eq\p\:\iff\:T^\ast\mdl{\V}\f^\ast\eq\p^\ast.
\end{align*}

\begin{example}
If $\K$ is $\cls{BA}$, then $\fosc{\K}$ is consequence in the one-variable fragment of first-order classical logic, corresponding to $\lgc{S5}$, and $\V$ is the variety of monadic Boolean algebras defined in~\cite{Hal55}. If $\K$ is $\cls{HA}$, then $\fosc{\K}$ is consequence in the one-variable fragment of first-order intuitionistic logic, corresponding to $\lgc{MIPC}$, and $\V$ is the variety of monadic Heyting algebras defined in~\cite{MV57}. Analogous results have been obtained for first-order intermediate logics~\cite{OS88,Suz89,Suz90,Bez98,BH02,CR15,CMRR17,CMRT22}. In particular,  if $\K$ is $\cls{GA}$, then $\fosc{\K}$ is consequence in the one-variable fragment of the first-order logic of linear frames, and $\V$ is the variety of monadic Heyting algebras satisfying the prelinearity axiom  $(x\to y)\jn(y\to x)\eq\ut$~\cite{CMRT22}. However, if $\K$ is the class of totally ordered members of $\cls{GA}$, then $\fosc{\K}$ is consequence in the one-variable fragment of first-order G{\"o}del logic, the first-order logic of linear frames with a constant domain, and $\V$ is the variety of monadic G{\"o}del algebras, i.e., monadic Heyting algebras satisfying the prelinearity axiom  and the constant domain axiom $\bo(\bo x\jn y)\eq\bo x\jn\bo y$~\cite{CR15}. Similarly, if $\K$ is the class of totally ordered MV-algebras, then $\fosc{\K}$ is consequence in the one-variable fragment of first-order \L ukasiewicz logic, and $\V$ is the variety of monadic MV-algebras~\cite{Rut59}.
\end{example}

%%%%%%%%%%%%%%%%%%%%%%%%%%%%%%%%%%%%%%%%%%

\section{An algebraic approach}\label{s:algebraic}

As our basic modal structures, let us define an \emph{m-lattice} to be any algebraic structure $\tuple{L,\mt,\jn,\bo,\di}$ with lattice reduct $\tuple{L,\mt,\jn}$ that satisfies the following equations:
\[
\begin{array}{r@{\quad}l@{\qquad\qquad}r@{\quad}l}
{\rm (L1_\bo)}	&\bo x \mt x \eq\bo x 		& {\rm (L1_\di)}	 & \di x \jn x \eq\di x\\
{\rm (L2_\bo)}	&\bo(x \mt y) \eq\bo x \mt\bo y	& {\rm (L2_\di)}	 &\di (x\jn y)\eq\di x \jn\di y\\
{\rm (L3_\bo)}	&\bo\di x \eq\di x			& {\rm (L3_\di)}	 &\di\bo x \eq\bo x. 
\end{array}
\]
Let $\a\le\b$ stand for $\a\mt\b\eq\a$. It is easily shown that every m-lattice also satisfies the following equations and quasi-equations:
\[
\begin{array}{r@{\quad}l@{\qquad\qquad}r@{\quad}l}
{\rm (L4_\bo)}	& \bo\bo x \eq\bo x 	& {\rm (L4_\di)} & \di\di x \eq\di x \\
{\rm (L5_\bo)}	& x \le y \,\Longrightarrow\,\bo x \le\bo y 	& {\rm (L5_\di)} 	& x \le y \,\Longrightarrow\,\di x \le\di y.
\end{array}
\]
Now let $\lang$ be any fixed lattice-oriented signature. We define an {\em m-$\lang$-lattice} to be any algebraic structure $\tuple{\alg{A},\bo,\di}$ such that $\alg{A}$ is an $\lang$-lattice, $\tuple{A,\mt,\jn,\bo,\di}$ is an m-lattice, and the following equation is satisfied for each $n\in\N$ and $\ops\in\lang_n$:
\[
\begin{array}{rl}
(\ops_\bo)	& \bo(\ops(\bo x_1,\dots,\bo x_{n}))\eq\ops(\bo x_1,\dots,\bo x_{n}).
\end{array}
\]
Using (\ops$_\bo$), {\rm (L3$_\bo$)}, and {\rm (L3$_\di$)}, it follows that $\tuple{\alg{A},\bo,\di}$ also satisfies for each $n\in\N$ and $\ops\in\lang_n$, the equation
\[
\begin{array}{rl}
(\ops_\di)	&  \di(\ops(\di x_1,\dots,\di x_{n}))\eq\ops(\di x_1,\dots,\di x_{n}).
\end{array}
\]
Finally, given a class $\K$ of $\lang$-lattices, let $\mK$ denote the class of m-$\lang$-lattices with an $\lang$-lattice reduct in $\K$. Note that if $\K$ is a variety, then so is $\mK$.

\begin{example}\label{e:mFLe}
It is straightforward to show that the notion of an m-$\langs$-lattice encompasses other algebraic structures considered in the literature. In particular, $\cls{mBA}$ and $\cls{mHA}$ are the varieties of monadic Heyting algebras~\cite{MV57} and monadic Boolean algebras~\cite{Hal55}, respectively. Moreover, if $\alg{A}$ is an $\cls{FL_e}$-algebra, then every m-$\langs$-lattice $\tuple{\alg{A},\bo,\di}$ satisfies the equations
\[
\begin{array}{r@{\quad}l@{\qquad\qquad}r@{\quad}l}
{\rm (L6_\bo)}	&\bo (x\to \bo y) \eq\di x \to\bo y	& {\rm (L6_\di)}	& \bo (\bo x\to y) \eq\bo x \to\bo y,
\end{array}
\]
and $\cls{mFL_e}$ is therefore the variety of monadic $\cls{FL_e}$-algebras introduced in~\cite{Tuy21}. Let us just check {\rm (L6$_\bo$)}, the proof for {\rm (L6$_\di$)} being very similar. Consider any $a,b\in A$. Since $a\le\di a$, by {\rm (L1$_\di$)}, also  $\di a\to\bo b\le a\to\bo b$. Hence, using {\rm (L3$_\bo$)}, {\rm ($\to_\bo$)}, and {\rm (L5$_\bo$)}, 
\begin{align*}
\di a\to\bo b = \bo \di a \to \bo b = \bo(\bo\di a \to\bo b) = \bo(\di a \to \bo b)\le \bo(a\to\bo b).
\end{align*}
Conversely, since $\bo (a\to\bo b) \le a\to\bo b$, by {\rm (L1$_\bo$)}, it follows by residuation that $a \le\bo (a\to\bo b)\to\bo b$ and hence, using {\rm (L5$_\di$)}, {\rm (L3$_\di$)}, and ($\to_\di$),
\begin{align*}
\di a \le\di(\bo (a\to\bo b)\to\bo b)=\bo (a\to\bo b)\to\bo b.
\end{align*}
By residuation again, $\bo (a\to\bo b) \le\di a \to\bo b$. 
\end{example}

\begin{example}\label{e:monadicvarieties}
The variety $\cls{mGA}$ corresponds to the one-variable fragment of Corsi's first-order logic of linear frames~\cite{CMRT22}, whereas the variety of monadic G{\"o}del algebras --- axiomatized relative to $\cls{mGA}$ by the constant domain axiom --- corresponds to the one-variable fragment of first-order G{\"o}del logic, the first-order logic of linear frames with a constant domain~\cite{CR15}. Note, however, that the variety  axiomatized relative to $\cls{mMV}$ by the constant domain axiom does not satisfy $\di x \pd \di x \eq \di (x \pd x)$ and therefore properly contains the variety of monadic MV-algebras studied in~\cite{Rut59,dNG04,CCVR20}. Consider, for example, the MV-algebra $\textbf{\L}_3 = \tuple{\{0,\frac12,1\},\mt,\jn,\pd,\to,0,1}$ (in the language of $\cls{FL_e}$-algebras) with the usual order, where $a\pd b:=\max(0,a+b-1)$ and $a\to b:=\min(1,1-a+b)$. Let $\bo 0 = \bo\frac12= \di 0 = 0$ and $\bo 1=\di\frac12=\di 1 =1$. Then $\tuple{\textbf{\L}_3,\bo,\di}\in\cls{mMV}$ satisfies the constant domain axiom, but $\di\frac12\pd\di\frac12= 1\pd 1=1\neq 0=\di 0=\di(\frac12\pd\frac12)$. 
\end{example}

We now provide a useful description of m-$\lang$-lattices that generalizes results in the literature for varieties such as monadic Heyting algebras~\cite{Bez98} and monadic $\cls{FL_e}$-algebras~\cite{Tuy21}.

\begin{lemma}\label{l:subalgebra}
Let $\tuple{\alg{A},\bo,\di}$ be any m-$\lang$-lattice. Then $\bo A\coloneqq\{\bo a\mid a\in A\}$ forms a subalgebra $\bo\alg{A}$  of $\alg{A}$, where $\bo A = \di A\coloneqq \{\di a\mid a\in A\}$ and for any $a\in A$,
\begin{align*}
\bo a = \max \{b\in\bo A\mid b\le a\} \quad\text{and}\quad\di a = \min\{b\in\bo A\mid a\le b\}.
\end{align*}
\end{lemma}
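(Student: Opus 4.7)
The plan is to split the claim into three sub-statements, each a short direct computation from the equations listed just before the lemma: (i) the set equality $\bo A = \di A$; (ii) closure of this common set under every $\ops \in \lang_n$, which is what makes $\bo\alg{A}$ a subalgebra of $\alg{A}$; and (iii) the maximum/minimum characterizations of $\bo a$ and $\di a$. No additional machinery should be needed.

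For (i) I would use $({\rm L3}_\di)$ to rewrite each $\bo a$ as $\di\bo a \in \di A$, and symmetrically $({\rm L3}_\bo)$ to rewrite each $\di a$ as $\bo\di a \in \bo A$, giving both inclusions at once. For (ii) the decisive fact is the axiom $(\ops_\bo)$, which says precisely that $\ops(\bo a_1,\dots,\bo a_n)$ is fixed by $\bo$ and therefore lies in $\bo A$; since $\mt,\jn \in \lang$, this simultaneously takes care of the lattice structure. It is worth recording in passing that $\bo$ and $\di$ themselves map $\bo A$ into itself, by $({\rm L4}_\bo)$ and $({\rm L3}_\di)$ respectively, which will be needed implicitly in the extremal step.

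For (iii) I would first note that $\bo a$ belongs to $\{b \in \bo A \mid b \le a\}$ by $({\rm L1}_\bo)$. To establish maximality, pick any $b = \bo c \in \bo A$ with $b \le a$: the monotonicity quasi-equation $({\rm L5}_\bo)$ yields $\bo b \le \bo a$, and $({\rm L4}_\bo)$ identifies $\bo b$ with $b$, so $b \le \bo a$. The minimum characterization of $\di a$ then follows by the mirror argument, using $({\rm L1}_\di)$, $({\rm L5}_\di)$, $({\rm L4}_\di)$ together with part (i) to represent an arbitrary $b \in \bo A$ as some $\di c'$ so that $\di b = b$ is available.

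I do not anticipate any genuinely delicate step; the lemma is essentially a bookkeeping result whose content is that the axioms are calibrated so that $\bo$ and $\di$ act as best-lower and best-upper projections onto a common subalgebra. The only thing to keep straight is the $\bo/\di$-symmetry and which specific equation justifies each line, so I would lay out the proof as two parallel columns mirroring the presentation of the axioms themselves.
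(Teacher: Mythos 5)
Your proposal is correct and follows essentially the same route as the paper's proof: $\bo A=\di A$ from $({\rm L3}_\bo)$ and $({\rm L3}_\di)$, closure under the operations from $(\ops_\bo)$, and the extremal characterizations from $({\rm L1}_\bo)$, $({\rm L4}_\bo)$, $({\rm L5}_\bo)$ with the dual argument for $\di$. The only difference is that you spell out the dual case and the (harmless, not strictly needed) observation that $\bo$ and $\di$ fix $\bo A$, where the paper simply says ``analogous reasoning''.
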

\begin{proof}
The fact that $\bo A$ forms a subalgebra of $\alg{A}$ follows directly using (\ops$_\bo$) for each operation symbol $\ops$ of $\lang$, and $\bo A =\di A$ follows from {\rm (L3$_\bo$)} and {\rm (L3$_\di$)}. Now consider any $a\in A$. If $b\in\bo A$ satisfies $b \le a$, then $b=\bo b\le \bo a$, by {\rm (L4$_\bo$)} and {\rm (L5$_\bo$)}. But $\bo a\le a$, by {\rm (L1$_\bo$)}, so $\bo a = \max \{b\in\bo A\mid b\le a\}$. Analogous reasoning yields $\di a = \min\{b\in\bo A\mid a\le b\}$.
\end{proof}

Let us call a sublattice $\alg{L}_0$ of a lattice $\alg{L}$ \emph{relatively complete} if for any $a\in L$, the set $\{b\in L_0\mid b\leq a\}$ contains a maximum and the set $\{b\in L_0\mid a\leq b\}$ contains a minimum. Equivalently, $\alg{L}_0$ is relatively complete if the inclusion map $f_0$ from $\tuple{L_0,\le}$ to $\tuple{L,\le}$ has left and right adjoints, that is, if there exist order-preserving maps $\bo\colon L\to L_0$ and $\di\colon L\to L_0$ such that for all $a\in L$ and $b\in L_0$,
\begin{align*}
f_0(b) \le a \iff b \le\bo a \quad\text{and}\quad a \le f_0(b) \iff\di a\le b.
\end{align*}
Let us also say that a subalgebra $\alg{A}_0$ of an $\lang$-lattice $\alg{A}$ is relatively complete if this property holds with respect to their lattice reducts. In particular, by Lemma~\ref{l:subalgebra}, the subalgebra $\bo\alg{A}$ of $\alg{A}$ is relatively complete for any m-$\lang$-lattice  $\tuple{\alg{A},\bo,\di}$. The following result establishes a converse.

\begin{lemma}\label{l:converse}
Let $\alg{A}_0$ be a relatively complete subalgebra of an $\lang$-lattice $\alg{A}$, and define $\bo_0 a \coloneqq \max \{b\in A_0\mid b\le a\}$ and $\di_0 a \coloneqq \min\{b\in A_0\mid a\le b\}$ for each $a\in A$. Then $\tuple{\alg{A},\bo_0,\di_0}$ is an m-$\lang$-lattice and $\bo_0 A =\di_0 A = A_0$.
\end{lemma}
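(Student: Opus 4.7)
The plan is to verify each defining equation of an m-$\lang$-lattice directly, using the relative completeness property for the lattice content and closure under the remaining operations of $\lang$ for the algebraic content. My starting point would be the observation that every $a \in A_0$ is a fixed point of both $\bo_0$ and $\di_0$: since $a \in A_0$ and $a \le a$, both the maximum and the minimum involved equal $a$ itself. Combined with the fact that $\bo_0 a, \di_0 a \in A_0$ by definition, this yields both $\bo_0 A = \di_0 A = A_0$ and an immediate verification of {\rm (L3$_\bo$)} and {\rm (L3$_\di$)}, since $\di_0 a$ and $\bo_0 a$ always lie in $A_0$.

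The absorption equations {\rm (L1$_\bo$)} and {\rm (L1$_\di$)} are immediate from $\bo_0 a \le a \le \di_0 a$. For {\rm (L2$_\bo$)}, the element $\bo_0 a \mt \bo_0 b$ lies in $A_0$ (using that $A_0$ is closed under $\mt$) and is bounded above by $a \mt b$, giving $\bo_0 a \mt \bo_0 b \le \bo_0(a \mt b)$. The reverse inequality uses order-preservation of $\bo_0$, itself a direct consequence of maximality (if $c \le d$, then $\bo_0 c \le c \le d$ with $\bo_0 c \in A_0$, so $\bo_0 c \le \bo_0 d$), applied to $a \mt b \le a$ and $a \mt b \le b$. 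The argument for {\rm (L2$_\di$)} is order-dual.

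Finally, the equation {\rm ($\ops_\bo$)} for each $n \in \N$ and $\ops \in \lang_n$ is the only place where the full subalgebra hypothesis on $\alg{A}_0$ is essential: since each $\bo_0 x_i$ lies in $A_0$ and $A_0$ is closed under $\ops$, the element $\ops(\bo_0 x_1, \dots, \bo_0 x_n)$ is in $A_0$ and is therefore a fixed point of $\bo_0$. I do not expect a serious obstacle here; the proof is a careful but routine unpacking of the definitions, and the only point that genuinely requires attention is tracking which equations rely merely on relative completeness of the lattice reduct versus those that need $\alg{A}_0$ to be closed under the remaining operations of $\lang$.
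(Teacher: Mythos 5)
Your proposal is correct and takes essentially the same approach as the paper's proof, namely direct verification of each defining equation, with {\rm (L2$_\bo$)} handled via the maximality characterization and ($\ops_\bo$) following from closure of $A_0$ under the operations of $\lang$. You spell out somewhat more detail than the paper (which dismisses most cases as ``straightforward to check''), and your fixed-point observation for elements of $A_0$ is a clean way to package {\rm (L3$_\bo$)}, {\rm (L3$_\di$)}, and $\bo_0 A = \di_0 A = A_0$ at once.
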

\begin{proof}
It is straightforward to check that $\tuple{A,\mt,\jn,\bo_0,\di_0}$ is an m-lattice; for example, it satisfies {\rm (L2$_\bo$)}, since for any $a_1,a_2\in A$,
\begin{align*}
\bo_0 (a_1\mt a_2) 
& = \max \{b\in A_0\mid b\le a_1\mt a_2\}\\
& = \max \{b\in A_0\mid b\le a_1 \text{ and } b \le a_2\} \\
& = \max \{b\in A_0\mid b\le a_1\} \mt \max \{b\in A_0\mid b\le a_2\} \\
& = \bo_0 a_1 \mt \bo_0 a_2.
\end{align*}
Since $\alg{A}_0$  is a subalgebra of $\alg{A}$, clearly $\tuple{\alg{A},\bo_0,\di_0}$ also satisfies (\ops$_\bo$). Hence $\tuple{\alg{A},\bo_0,\di_0}$ is an m-$\lang$-lattice and $\bo_0 A =\di_0 A = A_0$.
\end{proof}

Lemmas~\ref{l:subalgebra} and~\ref{l:converse} together yield the following representation theorem for  m-$\lang$-lattices.

\begin{theorem}\label{t:correspondence}
Let $\K$ be any class of $\lang$-lattices. Then there exists a one-to-one correspondence between the members of $\mK$ and ordered pairs $\tuple{\alg{A},\alg{A}_0}$ such that $\alg{A}\in\K$ and $\alg{A}_0$ is a relatively complete subalgebra of $\alg{A}$, implemented by the maps $\tuple{\alg{A},\bo,\di}\mapsto\tuple{\alg{A},\bo\alg{A}}$ and $\tuple{\alg{A},\alg{A}_0}\mapsto\tuple{\alg{A},\bo_0,\di_0}$.
\end{theorem}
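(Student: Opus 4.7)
The plan is to show that the two maps described in the theorem are mutually inverse bijections, which will follow almost directly from Lemmas~\ref{l:subalgebra} and~\ref{l:converse}. The two things to verify are: (i) that both maps are well-defined, i.e., each sends an object of one kind to an object of the correct other kind; and (ii) that composing them in either order yields the identity.

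For well-definedness, Lemma~\ref{l:subalgebra} already tells me that for any m-$\lang$-lattice $\tuple{\alg{A},\bo,\di} \in \mK$, the set $\bo A$ is the universe of a subalgebra $\bo\alg{A}$ of $\alg{A}$, and the two characterizations $\bo a = \max\{b\in\bo A \mid b\le a\}$ and $\di a = \min\{b\in\bo A \mid a\le b\}$ exhibit $\bo\alg{A}$ as a relatively complete subalgebra (the required left and right adjoints to the inclusion are given precisely by $\bo$ and $\di$). In the reverse direction, Lemma~\ref{l:converse} shows that $\tuple{\alg{A},\bo_0,\di_0}$ is an m-$\lang$-lattice with $\lang$-lattice reduct $\alg{A}\in\K$, so it lies in $\mK$.

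For the two round trips, I would argue as follows. Starting from $\tuple{\alg{A},\bo,\di}\in\mK$, the first map produces $\tuple{\alg{A},\bo\alg{A}}$, and then the second map produces $\tuple{\alg{A},\bo_0,\di_0}$ where $\bo_0 a = \max\{b\in\bo A \mid b\le a\}$ and $\di_0 a = \min\{b\in\bo A \mid a\le b\}$. But by Lemma~\ref{l:subalgebra}, these maxima and minima coincide with $\bo a$ and $\di a$, so $\bo = \bo_0$ and $\di = \di_0$, recovering the original structure. Conversely, starting from a pair $\tuple{\alg{A},\alg{A}_0}$ with $\alg{A}_0$ a relatively complete subalgebra of $\alg{A}$, the second map produces $\tuple{\alg{A},\bo_0,\di_0}\in\mK$, and the first map then produces $\tuple{\alg{A},\bo_0\alg{A}}$. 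By the final assertion of Lemma~\ref{l:converse}, $\bo_0 A = A_0$, so $\bo_0\alg{A} = \alg{A}_0$ as subalgebras of $\alg{A}$.

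There is no real obstacle here: the entire content of the theorem has been distributed across the two preceding lemmas, and the only remaining work is to check that the two constructions cancel, which amounts to observing that the explicit formulas for $\bo$ and $\di$ in Lemma~\ref{l:subalgebra} are exactly the definitions of $\bo_0$ and $\di_0$ in Lemma~\ref{l:converse}, and that the subalgebra $\bo_0\alg{A}$ recovered from a pair equals $\alg{A}_0$ by construction. Accordingly, the proof will be short: one or two sentences setting up the two maps, a brief appeal to each lemma for well-definedness, and a one-line verification of each composition.
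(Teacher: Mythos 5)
Your proposal is correct and matches the paper exactly: the paper presents Theorem~\ref{t:correspondence} as an immediate consequence of Lemmas~\ref{l:subalgebra} and~\ref{l:converse}, and the details you supply (well-definedness from the two lemmas, plus the two round-trip identities via the max/min formulas and the equality $\bo_0 A = A_0$) are precisely the routine verifications the paper leaves implicit.
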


Next, given any  $\lang$-lattice  $\alg{A}$ and set $W$, let $\alg{A}^W$ be the $\lang$-lattice with universe $A^W$, where the operations are defined pointwise.

\begin{proposition}\label{p:standard}
Let $\alg{A}$ be an $\lang$-lattice, $W$ a set, and $\alg{B}$ a subalgebra of $\alg{A}^W$ such that for each $f\in B$, the elements $\bigwedge_{v\in W}f(v)$ and $\bigvee_{v\in W}f(v)$  exist in $\alg{A}$ and the following constant functions belong to $B$,
\begin{align*}
\bo f\colon W\to A;\:u\mapsto\bigwedge_{v\in W} f(v)
\quad\text{and}\quad\di f\colon W\to A;\:u\mapsto\bigvee_{v\in W}f(v).
\end{align*}
Then $\tuple{\alg{B},\bo,\di}$ is an m-$\lang$-lattice. Moreover, if $\alg{A}$ belongs to a class $\K$ of $\lang$-lattices closed under taking subalgebras and direct powers, then $\tuple{\alg{B},\bo,\di}\in\mK$.
\end{proposition}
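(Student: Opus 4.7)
The plan is to apply Lemma~\ref{l:converse} by exhibiting a relatively complete subalgebra of $\alg{B}$ whose induced modalities coincide with the $\bo$ and $\di$ defined in the statement. This avoids a direct (and rather tedious) verification of the m-lattice equations together with $(\ops_\bo)$ for every operation symbol of $\lang$.

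First I would let $\alg{B}_0$ be the set of constant functions in $B$. The hypothesis guarantees $\bo f,\di f\in B_0$ for every $f\in B$, and conversely any constant $g\in B$ satisfies $\bo g = g = \di g$, so $B_0 = \{\bo f\mid f\in B\} = \{\di f\mid f\in B\}$. Since operations in $\alg{B}$ are computed pointwise, any $\ops^{\alg{B}}$ applied to constant functions is again constant and lies in $B$, confirming that $\alg{B}_0$ is a subalgebra of $\alg{B}$.

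Next I would verify that $\alg{B}_0$ is relatively complete in $\alg{B}$ and that the maps $\bo_0,\di_0$ produced by Lemma~\ref{l:converse} coincide with the $\bo,\di$ of the statement. For fixed $f\in B$, clearly $\bo f\in B_0$ and $(\bo f)(u)=\bigwedge_{v\in W}f(v)\le f(u)$ for every $u\in W$, so $\bo f\le f$ in $\alg{B}$. Conversely, if $g\in B_0$ is the constant function of value $c\in A$ and $g\le f$, then $c\le f(u)$ for every $u\in W$, whence $c\le\bigwedge_{v\in W}f(v)$ and $g\le\bo f$. Hence $\bo f = \max\{g\in B_0\mid g\le f\}$, and a dual argument gives $\di f = \min\{g\in B_0\mid f\le g\}$. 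Lemma~\ref{l:converse} then delivers that $\tuple{\alg{B},\bo,\di}$ is an m-$\lang$-lattice.

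For the moreover clause, if $\K$ is closed under subalgebras and direct powers, then $\alg{A}^W\in\K$, and hence $\alg{B}\in\K$ as a subalgebra of $\alg{A}^W$, so $\tuple{\alg{B},\bo,\di}\in\mK$ by definition. The only delicate point in this plan is ensuring that the prescribed extrema actually lie in $B_0$, but this is exactly what the assumption that $\bo f$ and $\di f$ belong to $B$ was designed to provide; the algebraic content is then packaged neatly by Lemma~\ref{l:converse} and the representation of Theorem~\ref{t:correspondence}.
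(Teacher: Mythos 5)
Your proof is correct, but it takes a genuinely different route from the paper's. The paper proves Proposition~\ref{p:standard} by direct verification: it checks the m-lattice equations ${\rm (L1_\bo)}$--${\rm (L3_\di)}$ on $\tuple{B,\mt,\jn,\bo,\di}$ and then establishes $(\ops_\bo)$ by an explicit pointwise computation, the key step being that $\bo f_i(v)=\bo f_i(u)$ for all $v\in W$ because each $\bo f_i$ is constant. You instead route everything through Lemma~\ref{l:converse}: you identify the subalgebra $\alg{B}_0$ of constant functions in $B$, check that it is relatively complete in $\alg{B}$ with $\bo f=\max\{g\in B_0\mid g\le f\}$ and $\di f=\min\{g\in B_0\mid f\le g\}$ (which is exactly where the hypothesis that $\bo f,\di f\in B$ and the pointwise order on $\alg{A}^W$ enter), and let the lemma deliver the m-$\lang$-lattice equations wholesale. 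Since Lemma~\ref{l:converse} precedes the proposition and is proved independently, there is no circularity. Your approach is more economical and conceptually ties the functional construction directly to the correspondence of Theorem~\ref{t:correspondence} by exhibiting $\bo B$ as precisely the constant functions; the paper's direct computation is self-contained and makes visible exactly which equality uses constancy. The ``moreover'' clause is handled identically in both. The only caveats are cosmetic: one should implicitly take $W$ nonempty (or note the degenerate case is trivial) so that $\bigwedge_{v\in W}c=c$ for a constant function, and note that $B_0$ is nonempty because it contains $\bo f$ for any $f\in B$; neither affects the argument.
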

\begin{proof}
It is straightforward to check that $\tuple{B,\mt,\jn,\bo,\di}$ satisfies the equations ${\rm (L1_\bo)}$--${\rm (L3_\bo)}$ and  ${\rm (L1_\di)}$--${\rm (L3_\di)}$ and is hence an m-lattice. To show that $\tuple{\alg{B},\bo,\di}$ is an m-$\lang$-lattice --- and therefore, if $\alg{A}$ belongs to a class $\K$ of $\lang$-lattices closed under taking subalgebras and direct powers, a member of $\mK$ --- observe that for any $n\in\N$, $\ops\in\lang_n$, $f_1,\dots,f_n\in B$, and $u\in W$, 
\begin{align*}
\bo(\ops(\bo f_1,\dots,\bo f_n))(u)&=\bigwedge_{v\in W}\ops(\bo f_1,\dots,\bo f_n)(v)\\
&=\bigwedge_{v\in W}\ops(\bo f_1(v),\dots,\bo f_n(v))\\
&=\ops(\bo f_1(u),\dots,\bo f_n(u))\\
&=\ops(\bo f_1,\dots,\bo f_n)(u),
\end{align*}
noting that in the third equality we have used the fact that $\bo f_i(v)=\bo f_i(u)$ for all $v\in W$ and $i\in\{1,\dots,n\}$.
\end{proof}

Let us call an m-$\lang$-lattice $\tuple{\alg{B},\bo,\di}$ {\em $\tuple{\alg{A},W}$-functional} if it is constructed as described in Proposition \ref{p:standard} for some $\lang$-lattice $\alg{A}$ and set $W$. Consider any class of $\lang$-lattices  $\K$. We call an m-$\lang$-lattice {\em $\K$-functional} if it is isomorphic to an $\tuple{\alg{A},W}$-functional m-$\lang$-lattice for some $\alg{A}\in\K$ and set $W$, omitting the prefix $\K$- if the class is clear from the context. 

The following result identifies the semantics of one-variable first-order logics with evaluations into functional m-$\lang$-lattices.

\begin{proposition}
Let $\alg{A}$ be any $\lang$-lattice.
\begin{enumerate}[itemsep=1mm]
\item[(a)] Let  $\model{S}=\tuple{S,\mfram{I}}$ be any $\alg{A}$-structure. Then $B\coloneqq\{\semvalue{\f}^\model{S}\mid\f\in\ofml(\lang)\}$ forms an $\tuple{\alg{A},S}$-functional m-$\lang$-lattice $\alg{B}$ and the $\alg{B}$-evaluation $g^\model{S}$, defined by $g^\model{S}(p_i)\coloneqq\mfram{I}(P_i)$ for each $i\in\N$, satisfies for all $\f,\psi\in\ofml(\lang)$, 
\begin{align*}
g^\model{S}(\f^\ast)=\semvalue{\f}^\model{S}
\quad\text{and}\quad
\model{S}\models\f\eq\psi\iff g^\model{S}(\f^\ast)=g^\model{S}(\psi^\ast).
\end{align*}
\item[(b)] Let $\alg{B}$ be any $\tuple{\alg{A},W}$-functional m-$\lang$-lattice for some set $W$, and let $e$ be any $\alg{B}$-evaluation. Then $\model{W}=\tuple{W,\mfram{J}}$, where  $\mfram{J}(P_i)\coloneqq e(p_i)$ for each $i\in\N$, is an $\alg{A}$-structure satisfying for all $\f,\psi\in\ofml(\lang)$,
\begin{align*}
e(\f^\ast)=\semvalue{\f}^\model{W}\quad\text{and}\quad\model{W}\models\f\eq\psi\iff e(\f^\ast)=e(\psi^\ast).
\end{align*}
\end{enumerate}
\end{proposition}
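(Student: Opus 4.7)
The plan is to prove both parts by straightforward induction on the complexity of formulas in $\ofml(\lang)$, exploiting the tight fit between the first-order semantics and the construction of $\tuple{\alg{A},W}$-functional m-$\lang$-lattices given by Proposition~\ref{p:standard}.

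For part (a), I first verify that $B=\{\semvalue{\f}^\model{S}\mid\f\in\ofml(\lang)\}$ satisfies the hypotheses of Proposition~\ref{p:standard} with $W=S$. Closure of $B$ under the pointwise operations of $\lang$ is immediate from the clause defining $\semvalue{\ops(\f_1,\dots,\f_n)}^\model{S}$, so $B$ is (the universe of) a subalgebra of $\alg{A}^S$. For the modal hypotheses, observe that for any $f=\semvalue{\f}^\model{S}\in B$, the totality of $\semvalue{\cdot}^\model{S}$ on $\All{x}\f$ and $\Exi{x}\f$ guarantees that $\bigwedge_{v\in S}f(v)$ and $\bigvee_{v\in S}f(v)$ exist in $\alg{A}$, and moreover the constant functions these determine are exactly $\semvalue{\All{x}\f}^\model{S}$ and $\semvalue{\Exi{x}\f}^\model{S}$, which lie in $B$ because $\All{x}\f,\Exi{x}\f\in\ofml(\lang)$. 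Hence Proposition~\ref{p:standard} yields an $\tuple{\alg{A},S}$-functional m-$\lang$-lattice $\alg{B}$ on the carrier $B$. I then prove $g^\model{S}(\f^\ast)=\semvalue{\f}^\model{S}$ by induction on $\f$: the atomic case is immediate from $g^\model{S}(p_i)=\mfram{I}(P_i)=\semvalue{P_i(x)}^\model{S}$; the $\lang$-connective step uses the pointwise definition of the operations of $\alg{B}$; and the quantifier step uses the definitions of $\bo^\alg{B}$ and $\di^\alg{B}$ from Proposition~\ref{p:standard}, which coincide with the defining clauses for $\semvalue{\All{x}\f}^\model{S}$ and $\semvalue{\Exi{x}\f}^\model{S}$. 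The equivalence $\model{S}\models\f\eq\psi\iff g^\model{S}(\f^\ast)=g^\model{S}(\psi^\ast)$ is then immediate.

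For part (b), I unfold the hypothesis that $\alg{B}$ is $\tuple{\alg{A},W}$-functional: $B$ is a subalgebra of $\alg{A}^W$, for each $f\in B$ the elements $\bigwedge_{v\in W}f(v)$ and $\bigvee_{v\in W}f(v)$ exist in $\alg{A}$, and the corresponding constant functions lie in $B$. Setting $\mfram{J}(P_i)\coloneqq e(p_i)\in B\subseteq A^W$ makes the atomic clause of the valuation well-defined. I then prove by induction on $\f$ that $e(\f^\ast)=\semvalue{\f}^\model{W}$, which simultaneously establishes the totality of $\semvalue{\cdot}^\model{W}$: at each stage $e(\f^\ast)$ is an element of $B$, so the meets and joins needed at the quantifier step are guaranteed to exist and to belong to $B$. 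The $\lang$-connective step is again pointwise; the quantifier step uses that $\bo^\alg{B} e(\f^\ast)$ is by definition the constant function $u\mapsto\bigwedge_{v\in W}e(\f^\ast)(v)=\bigwedge_{v\in W}\semvalue{\f}^\model{W}(v)$, which matches $\semvalue{\All{x}\f}^\model{W}$, and symmetrically for $\Exi{x}\f$. The equation equivalence then follows at once.

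There is no substantive obstacle: the argument is essentially bookkeeping. The mildly delicate point is in part (b), where the identity $e(\f^\ast)=\semvalue{\f}^\model{W}$ at the quantifier step must be invoked to even ensure that $\semvalue{\All{x}\f}^\model{W}$ is defined. This is not circular because $e(\bo\f^\ast)$ is unconditionally an element of $B$, and the properties of $\alg{B}$ as an $\tuple{\alg{A},W}$-functional m-$\lang$-lattice then immediately produce the required infimum in $\alg{A}$ and identify it with the constant function $e(\bo\f^\ast)$.
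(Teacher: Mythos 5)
Your proposal is correct and follows essentially the same route as the paper: verify the hypotheses of Proposition~\ref{p:standard} for $B$ (using totality of $\semvalue{\cdot}^\model{S}$ in (a) and functionality of $\alg{B}$ in (b)), then a straightforward induction establishing $g^\model{S}(\f^\ast)=\semvalue{\f}^\model{S}$ and $e(\f^\ast)=\semvalue{\f}^\model{W}$, with the latter induction simultaneously delivering totality of $\semvalue{\cdot}^\model{W}$. Your remark on why the quantifier step in (b) is not circular is exactly the point the paper handles implicitly.
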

\begin{proof}
(a) To show that $\alg{B}$ is $\tuple{\alg{A},S}$-functional, it suffices to observe that for any $\semvalue{\f}^\model{S}\in B$, since $\model{S}$ is an $\alg{A}$-structure, the elements $\bigwedge\{\semvalue{\f}^\model{S}(v)\mid v\in S\}$ and $\bigvee\{\semvalue{\f}^\model{S}(v)\mid v\in S\}$ exist in $\alg{A}$ and correspond to the constant functions $\semvalue{\All{x}\f}^\model{S}\in B$ and  $\semvalue{\Exi{x}\f}^\model{S}\in B$, respectively. The fact that $g^\model{S}(\f^\ast)=\semvalue{\f}^\model{S}$ for all $\f\in\ofml(\lang)$, follows by a straightforward induction on the definition of $\f$, from which it follows directly also that $\model{S}\models\f\eq\psi\iff g^\model{S}(\f^\ast)=g^\model{S}(\psi^\ast)$, for all $\f,\psi\in\ofml(\lang)$.

(b) Since $\alg{B}$ is $\tuple{\alg{A},W}$-functional, the elements $\bigwedge_{v\in W}f(v)$ and $\bigvee_{v\in W}f(v)$ exist in $\alg{A}$ for every $f\in B$. We prove that $e(\f^\ast)=\semvalue{\f}^\model{W}$, by induction on the definition of $\f$,  from which it follows immediately that  $\model{W}=\tuple{W,\mfram{J}}$ is an $\alg{A}$-structure and $\model{W}\models\f\eq\psi\iff e(\f^\ast)=e(\psi^\ast)$, for all $\f,\p\in\ofml(\lang)$. In particular, for the case where $\f=\All{x}\p$, using the induction hypothesis for the second line,
\begin{align*}
\semvalue{\All{x}\p}^\model{W}(u)
&=\bigwedge\{\semvalue{\p}^\model{W}(v)\mid v\in W\}\\
&=\bigwedge\{e(\p^\ast)(v)\mid v\in W\}\\
&=\bo e(\p^\ast)(u)\\
&=e((\All{x}\p)^\ast)(u).
\end{align*}
The case where $\f=\Exi{x}\p$ is very similar.
\end{proof}

As a direct consequence of this theorem, we obtain the following relationship between consequence in the first-order logic defined over a (suitable) class $\K$ of $\lang$-lattices and consequence in the variety $\mK$.

\begin{corollary}\label{c:soundness2}
For any class $\K$ of $\lang$-lattices closed under taking subalgebras and direct powers, and set of $\ofml(\lang)$-equations $T\cup\{\f\eq\psi\}$,
\begin{align*}
T^\ast\mdl{\mK}\f^\ast\eq\psi^\ast\quad\Longrightarrow\quad T\fosc{\K}\f\eq\psi.
\end{align*}
Moreover, if every member of $\mK$ is $\K$-functional
\begin{align*}
T^\ast\mdl{\mK}\f^\ast\eq\psi^\ast\quad\Longleftrightarrow\quad T\fosc{\K}\f\eq\psi.
\end{align*}
\end{corollary}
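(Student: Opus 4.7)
The corollary is essentially a packaging of the preceding proposition; the plan is to use part (a) to transport from first-order structures to functional m-$\lang$-lattices (giving the first implication), and part (b) together with the functionality hypothesis to transport back (giving the converse).

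For the first implication, assume $T^\ast \mdl{\mK} \f^\ast \eq \psi^\ast$, fix $\alg{A} \in \K$ and an $\alg{A}$-structure $\model{S} = \tuple{S,\mfram{I}}$ validating every equation in $T$, and apply part (a) of the preceding proposition to obtain the $\tuple{\alg{A},S}$-functional m-$\lang$-lattice $\alg{B}$ with universe $\{\semvalue{\f}^\model{S} \mid \f \in \ofml(\lang)\}$ together with the $\alg{B}$-evaluation $g^\model{S}$. Since $\K$ is closed under subalgebras and direct powers, Proposition~\ref{p:standard} places $\alg{B}$ in $\mK$. The characterization $\model{S} \models \f' \eq \p' \iff g^\model{S}((\f')^\ast) = g^\model{S}((\p')^\ast)$ then turns the hypothesis on $\model{S}$ into the hypothesis of $T^\ast \mdl{\mK} \f^\ast \eq \psi^\ast$ applied to $g^\model{S}$, yielding $g^\model{S}(\f^\ast) = g^\model{S}(\psi^\ast)$, and hence $\model{S} \models \f \eq \psi$ by the same characterization.

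For the converse, assume every member of $\mK$ is $\K$-functional and that $T \fosc{\K} \f \eq \psi$. Given $\alg{C} \in \mK$ and an $\alg{C}$-evaluation $e$ with $e((\f')^\ast) = e((\p')^\ast)$ for every $\f' \eq \p' \in T$, pick an isomorphism $h \colon \alg{C} \to \alg{B}$ onto some $\tuple{\alg{A},W}$-functional m-$\lang$-lattice $\alg{B}$ with $\alg{A} \in \K$. The composite $h \circ e$ is a $\alg{B}$-evaluation identifying the same equations, so part (b) produces an $\alg{A}$-structure $\model{W}$ with $\model{W} \models \f' \eq \p'$ for all $\f' \eq \p' \in T$. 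The assumed consequence gives $\model{W} \models \f \eq \psi$, which by part (b) again means $(h \circ e)(\f^\ast) = (h \circ e)(\psi^\ast)$, and applying $h^{-1}$ yields $e(\f^\ast) = e(\psi^\ast)$, as required.

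There is no real obstacle here beyond bookkeeping: both directions are immediate translations between the two viewpoints supplied by the preceding proposition. The only care needed is the routine observation that $\K$-functionality is preserved along isomorphism (so that the second direction handles arbitrary, rather than literally constructed, $\alg{C}\in\mK$), which is immediate since isomorphisms of m-$\lang$-lattices transport both evaluations and the satisfaction relation without change.
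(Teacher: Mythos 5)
Your proof is correct and follows exactly the route the paper intends: the paper states the corollary as a direct consequence of the preceding proposition, and your argument is precisely the spelled-out version, using part (a) together with Proposition~\ref{p:standard} for the first implication and part (b) plus transport along an isomorphism for the converse.
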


Let us remark that a stricter notion of a functional algebra for a class $\K$ of $\lang$-lattices is considered in~\cite{BH02,CMT22} that coincides in our setting with the notion of being {\em $\K^c$-functional}, where  $\K^c$ is the class of complete members of $\K$. That is, an m-$\lang$-lattice $\tuple{\alg{B},\bo,\di}$ is $\K^c$-functional if it is isomorphic to a subalgebra of $\tuple{\alg{A}^W,\bo,\di}$ for some complete $\lang$-lattice $\alg{A}\in\K$ and set $W$, where $\bo$ and $\di$ are defined as described in Proposition \ref{p:standard}. 

%%%%%%%%%%%%%%%%%%%%%%%%%%%%%%%%%%%%%%%%%%%%%%%%

\section{A functional representation theorem}\label{s:functional}

Adapting the proof of a similar result for Heyting Algebras~\cite[Theorem~3.6]{BH02}, we prove in this section that if a variety $\V$ of $\lang$-lattices has the superamalgamation property, then every member of $\mV$ is $\V$-functional, and hence, by Corollary~\ref{c:soundness2}, consequence in the one-variable first-order logic defined over $\V$ corresponds to consequence in $\mV$. 

We first recall the necessary algebraic notions. Let $\K$ be a class of $\lang$-lattices. A  {\em V-formation} in $\K$ is a $5$-tuple $\tuple{\alg{A},\alg{B}_1,\alg{B}_2,f_1,f_2}$  consisting of $\alg{A},\alg{B}_1,\alg{B}_2\in \K$ and embeddings $f_1\colon\alg{A}\to\alg{B}_1$,  $f_2\colon\alg{A}\to\alg{B}_2$. An {\em amalgam} in $\K$ of a V-formation $\tuple{\alg{A},\alg{B}_1,\alg{B}_2,f_1,f_2}$  in $\K$ is a triple $\tuple{\alg{C},g_1,g_2}$ consisting of $\alg{C}\in\K$ and embeddings $g_1\colon\alg{B}_1\to\alg{C}$, $g_2\colon\alg{B}_2\to\alg{C}$ such that $g_1\circ f_1 = g_2\circ f_2$; it is called a {\em superamalgam} if also for any $b_1\in B_1$, $b_2\in B_2$ and distinct $i,j\in\{1,2\}$,
\begin{align*}
g_i(b_i)\le g_j(b_j) \enspace\Longrightarrow\enspace g_i(b_i)\le g_i\circ f_i(a) = g_j\circ f_j(a)  \le g_j(b_j)\,\text{ for some }a\in A.
\end{align*}
The class $\K$ is said to have the {\em superamalgamation property} if every V-formation in $\K$ has a superamalgam in~$\K$.

\begin{theorem}\label{t:functional}
Let $\K$ be a class of $\lang$-lattices that is closed under taking direct limits and subalgebras, and has the superamalgamation property. Then every member of $\K$ is functional. 
\end{theorem}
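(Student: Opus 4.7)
The plan is to adapt the Bezhanishvili--Harding construction for monadic Heyting algebras, gluing many copies of $\alg{A}$ along $\alg{A}_0\coloneqq\bo\alg{A}=\di\alg{A}$. Fix $\tuple{\alg{A},\bo,\di}\in\mK$; by Lemma~\ref{l:subalgebra} and Theorem~\ref{t:correspondence}, $\alg{A}_0$ is a relatively complete subalgebra of $\alg{A}$. I aim to produce an algebra $\alg{C}\in\K$, an infinite set $W$, and a family of embeddings $\{g_w\colon\alg{A}\to\alg{C}\}_{w\in W}$ that all agree on $\alg{A}_0$, so that the map $\iota\colon\alg{A}\to\alg{C}^W$ defined by $\iota(a)(w)\coloneqq g_w(a)$ exhibits $\tuple{\alg{A},\bo,\di}$ as a $\tuple{\alg{C},W}$-functional m-$\lang$-lattice.

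To build $\alg{C}$, fix any infinite set $W$ and construct, by induction on finite subsets $F\subseteq W$, a directed system $\{\alg{C}_F\}$ of algebras in $\K$, each equipped with a distinguished embedding $\alg{A}_0\hookrightarrow\alg{C}_F$ and with embeddings $g^F_v\colon\alg{A}\to\alg{C}_F$ for $v\in F$ extending it. Starting from $\alg{C}_\emptyset\coloneqq\alg{A}_0$, at each inductive step I apply superamalgamation to the V-formation $\tuple{\alg{A}_0,\alg{C}_F,\alg{A}}$ (with the distinguished and the inclusion embeddings of $\alg{A}_0$) to obtain $\alg{C}_{F\cup\{w\}}\in\K$ together with embeddings $h_1\colon\alg{C}_F\hookrightarrow\alg{C}_{F\cup\{w\}}$ and $h_2\colon\alg{A}\hookrightarrow\alg{C}_{F\cup\{w\}}$; the earlier $g^F_v$ extend via $h_1$, while $h_2$ becomes the fresh $g^{F\cup\{w\}}_w$. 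Setting $\alg{C}\coloneqq\varinjlim_F\alg{C}_F$ (which lies in $\K$ by closure under direct limits) and letting $g_w$ denote the induced embedding then yields the data sought.

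The map $\iota$ is automatically an injective $\lang$-lattice homomorphism. The remaining work is to verify that for every $a\in A$, $\iota(\bo a)$ is the constant function with value $\bigwedge_{v\in W}g_v(a)$ in $\alg{C}$, and dually for $\di$. Writing $g_*$ for the common restriction to $\alg{A}_0$, one has $g_w(\bo a)=g_*(\bo a)$ for every $w$, so $\iota(\bo a)$ is indeed constant, and $g_*(\bo a)\leq g_v(a)$ for every $v$ because $\bo a\leq a$. The greatest-lower-bound direction is where the \emph{super}amalgamation hypothesis is decisive: given $d\in C$ with $d\leq g_v(a)$ for all $v\in W$, the element $d$ lies in some $\alg{C}_F$ with $F$ finite; I then pick $w_0\in W\setminus F$ (possible as $W$ is infinite), and in $\alg{C}_{F\cup\{w_0\}}$ the inequality $d\leq g_{w_0}(a)$ holds. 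Superamalgamation of $\tuple{\alg{A}_0,\alg{C}_F,\alg{A}}$ supplies $c\in A_0$ with $d\leq c\leq a$, and since $c\in A_0$ together with $c\leq a$ forces $c\leq\bo a$ by Lemma~\ref{l:subalgebra}, I conclude $d\leq g_*(\bo a)$. The argument for $\di$ is symmetric.

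The main obstacle is the bookkeeping of the directed system: the transition maps induced along inclusions $F\subseteq F'$ must compose coherently, and the embeddings $g_w\colon\alg{A}\to\alg{C}$ must be independent of the order in which the finite-stage amalgamations are assembled. Once this is handled, the decisive conceptual step --- and the only place where the full \emph{super}amalgamation hypothesis, as opposed to plain amalgamation, is invoked --- is the greatest-lower-bound argument above, which exploits the tight correspondence between the superamalgamation condition and the characterisation $\bo a=\max\{c\in A_0\mid c\leq a\}$ from Lemma~\ref{l:subalgebra}.
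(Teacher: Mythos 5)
Your proposal is correct and is essentially the paper's own argument: iterated superamalgamation of copies of $\alg{A}$ over $\bo\alg{A}$, passage to the direct limit, and the use of the superamalgam interpolant $c\in A_0$ with $d\le c\le a$ (hence $c\le\bo a$ by Lemma~\ref{l:subalgebra}) to show that $g_*(\bo a)$ is the greatest lower bound. The only difference is organisational: the paper indexes the construction by a linear chain over $\N^{>0}$, adding one copy of $\alg{A}$ per step, which dissolves the coherence bookkeeping for the directed system over finite subsets that you flag as the main obstacle.
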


\begin{proof}
Consider any $\tuple{\alg{A},\bo,\di}\in\mK$. Then $\alg{A}\in\K$ and, since $\K$ is closed under taking subalgebras, also $\bo\alg{A}\in\K$. We let $W:=\N^{>0}$ and define inductively a sequence of $\lang$-lattices $\tuple{\alg{A}_i}_{i\in W}$ in $\K$ and sequences of $\lang$-lattice embeddings $\tuple{f_i\colon\bo\alg{A}\to\alg{A}_i}_{i\in W}$, $\tuple{g_i\colon\alg{A}\to\alg{A}_i}_{i\in W}$, $\tuple{s_i\colon \alg{A}_{i-1}\to\alg{A}_i}_{i\in W}$. 

Let $\alg{A}_0\coloneqq\alg{A}$ and let $f_0\colon\bo\alg{A}\to\alg{A}$ be the inclusion map. For each $i\in W$, there exists inductively, by assumption, a superamalgam $\tuple{\alg{A}_i,s_i,g_i}$ of the V-formation $\tuple{\bo\alg{A},\alg{A}_{i-1},\alg{A},f_{i-1},f_0}$, and we define also $f_i\coloneqq s_i\circ f_{i-1}=g_i\circ f_0=g_i|_{\bo A}$. 

Now let $\alg{L}$ be the direct limit of the system $\tuple{\tuple{\alg{A}_i,s_i}}_{i\in W}$ with an associated sequence of $\lang$-lattice embeddings $\tuple{l_i\colon \alg{A}_i\to \alg{L}}_{i\in W}$. Since $\K$ is closed under taking direct limits, $\alg{L}$ belongs to $\K$. The first two superamalgamation steps of this construction are depicted in the following diagram:
\begin{center}
\begin{tikzpicture}
\coordinate[label=below:$\Box\mathbf{A}$](Box A1) at (0,0.5);
\coordinate[label=below:$\mathbf{A}$](Box A2) at (2.5,0.5);
%\coordinate[label=above:$\Box\mathbf{A}$](Box A3) at (5,0);
\coordinate[label=below:$\mathbf{A}$](A) at (0,-1.6);
\coordinate[label=below:$\mathbf{A}_1$](A1) at (2.6,-1.6);
\coordinate[label=below:$\mathbf{A}_2$](A2) at (5.2,-1.6);
\coordinate[label=below:$\mathbf{A}_3$](A3) at (7.7,-1.6);
\coordinate[label=below:$\mathbf{L}$] (L) at (5.2,-3.7);
\coordinate[label=below:$\cdots$] (dots) at (8.5,-1.7);
\draw[->] (0,-0.2) -- (0,-1.47) node[midway, right]{$f_0$};
\draw[->] (0.5,0.2) -- (2.1,0.2) node[midway, above]{$f_0$};
\draw[->] (0.5,-0.2) -- (2.1,-1.47) node[midway, above right]{$f_1$};
\draw[->] (3,0.2) -- (7.2,-1.47) node[midway, above right]{$g_3$};
\draw[->] (3,-0.2) -- (4.7,-1.47) node[midway, above right]{$g_2$};
\draw[->] (2.5,-0.2) -- (2.5,-1.47) node[midway, right]{$g_1$};
\draw[->] (0.5,-1.88) -- (2.1,-1.88) node[midway, above]{$s_1$};
\draw[->] (3,-1.88) -- (4.7,-1.88) node[midway, above]{$s_2$};
\draw[->] (5.6,-1.88) -- (7.2,-1.88) node[midway, above]{$s_3$};
\draw[->] (3,-2.25) -- (4.85,-3.65) node[midway, above right]{$l_1$};
\draw[->] (5.2,-2.25) -- (5.2,-3.65) node[midway, right]{$l_2$};
\draw[->] (7.7,-2.25) -- (5.6,-3.65) node[midway, above left]{$l_3$};
\end{tikzpicture}
\end{center}
Since the operations of $\alg{L}^W$ are defined pointwise, $B\coloneqq\{\tuple{l_i\circ g_i(a)}_{i\in W}\mid a\in A\}$ is the universe of a subalgebra $\alg{B}$ of $\alg{L}^W$. We can also show that for each $a\in A$, the elements
\begin{align*}
\bigwedge_{j\in W}l_j\circ g_j(a)\qquad\text{and}\qquad\bigvee_{j\in W}l_j\circ g_j(a)
\end{align*}
exist in $L$ and hence that $\tuple{\alg{B},\bo,\di}$, with $\bo$ and $\di$ defined in Proposition \ref{p:standard}, is an $\tuple{\alg{L},W}$-functional m-$\lang$-lattice. Let $a\in A$ and fix some $i\in W$. It suffices to  show that $l_i\circ g_i(\bo a)$ and $l_i\circ g_i(\di a)$ are the greatest lower bound and  least upper bound, respectively, of $S\coloneqq\{l_j\circ g_j(a)\mid j\in W\}$. Observe first that for any $k\in W$,
\begin{align*}
l_k\circ g_k(\bo a)=l_k\circ f_k(\bo a)=l_{k+1}\circ s_{k+1}\circ f_k(\bo a)=l_{k+1}\circ g_{k+1}(\bo a),
\end{align*}
where the first and last equations follow from the definition of $f_k$ and the second follows from the fact that $\alg{L}$ is a direct limit. Hence for each $j\in W$,
\begin{align*}
l_i\circ g_i(\bo a)=l_j\circ g_j(\bo a)\leq l_j\circ g_j(a).
\end{align*}
So $l_i\circ g_i(\bo a)$ is a lower bound of $S$. Now suppose that $c\in L$ is another lower bound of $S$. Since $\alg{L}$ is a direct limit, there exist $k\in W$ and $d\in A_k$ such that
\begin{align*}
l_{k+1}\circ s_{k+1}(d)=l_k(d)=c\leq l_{k+1}\circ g_{k+1}(a).
\end{align*}

Since $l_{k+1}$ is an embedding, $s_{k+1}(d)\leq g_{k+1}(a)$. Hence, since $\tuple{\alg{A}_{k+1},s_{k+1},g_{k+1}}$ is a superamalgam of $\tuple{\bo\alg{A},\alg{A}_k,\alg{A},f_k,f_0}$, there exists $b\in\bo A$ such that
\begin{align*}
s_{k+1}(d)\leq s_{k+1}\circ f_k(b)=g_{k+1}\circ f_0(b)\leq g_{k+1}(a).
\end{align*}
But $s_{k+1}$ and $g_{k+1}$ are embeddings and $f_0$ is the inclusion map, so $d\leq f_k(b)$ and $b\leq a$. The latter inequality together with $b\in\bo A$, yields $b=\bo b\leq\bo a$. Hence also $f_k(b)\leq f_k(\bo a)=g_k(\bo a)$, and, using the first inequality,
\begin{align*}
c=l_k(d)\leq l_k\circ f_k(b)\leq l_k\circ g_k(\bo a)=l_i\circ g_i(\bo a).
\end{align*}
So $\bigwedge_{j\in W} l_j\circ g_j(a)=l_i\circ g_i(\bo a)$ exists in $L$ and the constant function $\tuple{l_i\circ g_i(\bo a)}_{i\in W}$ belongs to $B$. Also, symmetrically, $\bigvee_{j\in W}l_j\circ g_j(a)=l_i\circ g_i(\di a)$ exists in $L$ and the constant function $\tuple{l_i\circ g_i(\di a)}_{i\in W}$ belongs to $B$.

To show that $\tuple{\alg{A},\bo,\di}$ is functional, it remains to prove that the following map is an isomorphism:
\begin{align*}
f\colon\tuple{\alg{A},\bo,\di}\to\tuple{\alg{B},\bo,\di};\quad a\mapsto \tuple{l_i\circ g_i(a)}_{i\in W}.
\end{align*}
Since the operations of $\alg{L}^W$ are defined pointwise, it is easily checked that $f$ is an $\lang$-lattice isomorphism. Moreover, recalling that $l_i\circ g_i(\bo a)=\bigwedge_{j\in W} l_i\circ g_i(a)$ for each $a\in A$, it follows that
\begin{align*}
f(\bo a)=\tuple{l_i\circ g_i(\bo a)}_{i\in W}
=\tuple{\bigwedge_{j\in W} l_j\circ g_j(a)}_{i\in W}
=\bo\tuple{l_i\circ g_i(a)}_{i\in W}
=\bo f(a),
\end{align*}
and, similarly, $f(\di a)=\di f(a)$ for all $a\in A$. 
\end{proof}

Combining Theorem~\ref{t:functional} with Corollary~\ref{c:soundness2} yields the following result.

\begin{corollary}\label{c:completeness}
If $\V$ is a variety of $\lang$-lattices that has the superamalgamation property, then for any set $T\cup\{\f\eq\p\}$ of $\ofml(\lang)$-equations,
\begin{align*}
T\fosc{\V}\f\eq\p \quad\Longleftrightarrow\quad T^\ast\mdl{\mV}\f^\ast \eq\p^\ast.
\end{align*}
\end{corollary}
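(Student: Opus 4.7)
The plan is to deduce Corollary~\ref{c:completeness} as a direct packaging of the two immediately preceding results: Theorem~\ref{t:functional} (every member of $\mV$ is functional under the stated hypotheses) and Corollary~\ref{c:soundness2} (which relates $\fosc{\V}$ and $\mdl{\mV}$, with full equivalence holding once functionality is established).

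First I would unpack the hypothesis on $\V$. Being a variety, $\V$ is automatically closed under subalgebras and direct powers, so the first part of Corollary~\ref{c:soundness2} immediately delivers the ``soundness'' direction $T^\ast \mdl{\mV} \f^\ast \eq \p^\ast \Longrightarrow T \fosc{\V} \f \eq \p$. The same corollary tells us that the converse direction follows as soon as every member of $\mV$ is $\V$-functional, so the task reduces to verifying this functionality statement.

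To establish functionality I would invoke Theorem~\ref{t:functional} with the class $\K = \V$. Its three hypotheses are closure under direct limits, closure under subalgebras, and the superamalgamation property. The third is granted by assumption; closure under subalgebras is part of the definition of a variety; and closure under direct limits is the standard fact that identities are preserved under directed colimits of algebras, so every variety is closed under direct limits. Theorem~\ref{t:functional} then yields that every $\tuple{\alg{A},\bo,\di}\in\mV$ is $\V$-functional, and plugging this into the second part of Corollary~\ref{c:soundness2} gives the desired equivalence.

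Since the genuine algebraic work has already been carried out in the proof of Theorem~\ref{t:functional} (via the superamalgamation-based limit construction), there is no real obstacle remaining here: the corollary is a two-line combination of the preceding results. The only minor bookkeeping point is to observe explicitly that varieties are closed under direct limits, so that the hypotheses of Theorem~\ref{t:functional} are indeed met in the specialised setting of a variety.
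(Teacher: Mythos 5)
Your proposal is correct and matches the paper exactly: the paper's proof is precisely the one-line combination of Theorem~\ref{t:functional} (applied with $\K=\V$, using that a variety is closed under subalgebras and direct limits) and Corollary~\ref{c:soundness2}. Your explicit bookkeeping of the hypotheses is just a spelled-out version of what the paper leaves implicit.
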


\begin{example}\label{e:lattices}
The variety of lattices has the superamalgamation property~\cite{Gra98}. Hence, by Theorem~\ref{t:functional}, every m-lattice is functional, and consequence in the one-variable first-order lattice logic corresponds to consequence in m-lattices.
\end{example}

\begin{example}
$\cls{FL_e}$, $\cls{FL_{ew}}$, and $\cls{FL_{ec}}$, and many other varieties of $\cls{FL_e}$-algebras have the superamalgamation property, which is equivalent in this setting to the Craig interpolation property for the associated substructural logic (see,~e.g.,~\cite{GJKO07}). Hence, for any such variety $\V$ --- notably, for $\V\in\{\cls{FL_e},\cls{FL_{ew}},\cls{FL_{ec}}\}$ --- every member of $\mV$ is functional, and consequence in the one-variable first-order substructural logic defined over $\V$ corresponds to consequence in $\mV$.
\end{example}

\begin{example}\label{e:modal}
A normal modal logic has the Craig interpolation property if and only if the associated variety of modal algebras --- Boolean algebras with an operator --- has the superamalgamation property~\cite{Mak92}. Moreover, there exist infinitely many such logics~\cite{Rau82}, including well-known cases such as $\lgc{K}$, $\lgc{KT}$, $\lgc{K4}$, and $\lgc{S4}$. Hence our results yield axiomatizations for the one-variable fragments of infinitely many first-order logics defined over varieties of modal algebras.
\end{example}

Suppose finally that $\K$ is a class of $\lang$-lattices that is not only closed under taking direct limits and subalgebras and has the superamalgamation property, but also admits regular completions. In this case, we can adapt the proof of Theorem~\ref{t:functional} to show that every member of $\K$ is $\K^c$-functional, which --- as noted at the end of Section~\ref{s:algebraic} --- corresponds to the stricter notion of a functional algebra considered in~\cite{BH02,CMT22}. Just observe that, given some $\tuple{\alg{A},\bo,\di}\in\mK$, the direct limit $\alg{L}\in\K$ constructed  in the proof embeds into some $\alg{\bar{L}}\in\K^c$ and hence, reasoning as before, $\tuple{\alg{A},\bo,\di}$ is isomorphic to a subalgebra of $\tuple{\alg{\bar{L}}^W,\bo,\di}$.

%%%%%%%%%%%%%%%%%%%%%%%%%%%%%%%%%%%%%%%%%%%%%%%%%%%%%%%%

\section{A proof-theoretic strategy}\label{s:prooftheory}

In this section, we describe an alternative proof-theoretic strategy for establishing completeness of axiomatizations for one-variable fragments of first-order logics. The key step is to prove that a derivation of a one-variable formula in a sequent calculus for the first-order logic can be transformed into a derivation that uses just one variable. To illustrate, we consider the first-order version of the full Lambek calculus with exchange $\lgc{FL_e}$,  then extend the method to a broader family of first-order substructural logics.

The one-variable fragment of the first-order version of $\lgc{FL_e}$ can be presented as a cut-free sequent calculus. This presentation has the advantage that although a derivation of a one-variable formula in the calculus may use more than one variable, it will not introduce any new occurrences of quantifiers. We therefore consider the set $\ofmls(\langs)$ of first-order formulas built inductively from unary predicates $\{P_i\}_{i\in\N}$, variables $\{x\}\cup\{x_i\}_{i\in\N}$, connectives in $\lang_s$, and quantifiers $\All{x}$ and $\Exi{x}$, such that no occurrence of a variable $x_i$  lies in the scope of a quantifier. Clearly, $\ofml(\langs)\subseteq\ofmls(\langs)$. We write $\f(\bar{w})$ to denote that the free variables of $\f\in\ofmls(\langs)$ belong to the set $\bar{w}$, and indicate by writing $\f(\bar{w},y)$ that $y\not\in\bar{w}$.

For the purposes of this paper, we define a {\em sequent} to be an ordered pair of finite multisets of formulas in $\ofmls(\langs)$, denoted by $\Ga\seq\De$, such that $\De$ contains at most one $\langs$-formula.\footnote{The full Lambek calculus with exchange is typically presented using sequents consisting of finite {\em sequences} of formulas and an ``exchange rule'' for permuting formulas (see, e.g.,~\cite{GJKO07,MPT23}).} As usual, we denote the multiset sum of two finite multisets of formulas $\Ga_1$ and $\Ga_2$ by $\Ga_1,\Ga_2$, and the empty multiset by an empty space. We also define, for $n\in\N^{>0}$ and $\f_1,\dots,\f_n,\p\in\ofmls(\langs)$,
\begin{align*}
\textstyle
\prod(\f_1,\dots,\f_n)\coloneqq\f_1\cdots\f_n, \quad
\prod()\coloneqq\ut, \quad
\sum(\p)\coloneqq\p, \quad
\sum()\coloneqq\zr.
\end{align*}
The sequent calculus $\fFLe$ is displayed in Figure~\ref{f:fFLe}, where the quantifier rules are subject to the following side-conditions:
\begin{enumerate}[itemsep=1mm]
\item[\rm (i)] if the conclusion of an application of  $\falr$ or $\err$ contains at least one free occurrence of a variable, then the variable $u$ occurring in the premise also occurs freely in the conclusion;
\item[\rm (ii)]  the variable $y$ occurring in the premise of $\farr$ and $\elr$ does not occur freely in the conclusion of the rule.
\end{enumerate}
If there exists a derivation $d$ of a sequent $\Ga\seq\De$ in a sequent calculus $\lgc{C}$, we write $d\der{\lgc{C}}\Ga\seq\De$ or simply $\der{\lgc{C}}\Ga\seq\De$. 

\begin{figure}[t]
\centering
\fbox{
 \parbox{.95\linewidth}{
 \[
\begin{array}{c}
\text{Axioms}\\[.2in]
\begin{array}{ccccc}
\infer[\pfa{\idr}]{\f \seq \f}{} & \qquad & \infer[\pfa{\flr}]{\zr \seq}{} & \qquad & \infer[\pfa{\trr}]{\seq\ut}{}\
\end{array}\\[.25in]
\text{Operation Rules}\\[.2in]
\begin{array}{ccc}
\infer[\pfa{\tlr}]{\Ga,  \ut \seq \De}{\Ga \seq \De} & \quad &\infer[\pfa{\frr}]{\Ga \seq\zr}{\Ga \seq}\\[.15in]
\infer[\pfa{\ilr}]{\Ga_1, \Ga_2, \f \to \p \seq \De}{\Ga_1 \seq \f & \Ga_2, \p \seq \De} & \quad & 
\infer[\pfa{\irr}]{\Ga \seq \f \to \p}{\Ga, \f \seq \p}\\[.15in]
\infer[\pfa{\pdlr}]{\Ga, \f \pd \p \seq \De}{\Ga,\f,\p \seq \De} & \quad &
\infer[\pfa{\pdrr}]{\Ga_1, \Ga_2 \seq \f \pd \p}{\Ga_1 \seq \f & \Ga_2 \seq \p}\\[.15in]
\infer[\pfa{\alr_1}]{\Ga, \f \mt \p \seq \De}{\Ga, \f \seq \De} & & 
\infer[\pfa{\orr_1}]{\Ga \seq \f \jn \p}{\Ga \seq \f}\\[.15in]
\infer[\pfa{\alr_2}]{\Ga, \f \mt \p \seq \De}{\Ga, \p \seq \De} & & 
\infer[\pfa{\orr_2}]{\Ga \seq \f \jn \p}{\Ga \seq \p}\\[.15in]
\infer[\pfa{\olr}]{\Ga, \f \jn \p \seq \De}{\Ga, \f \seq \De & \Ga, \p \seq \De} & & 
\infer[\pfa{\arr}]{\Ga \seq \f \mt \p}{\Ga \seq \f & \Ga \seq \p}\\[.15in]
\infer[\pfa{\falr}]{\Ga, \All{x}\f(x) \seq\De}{\Ga,\f(u)\seq\De} & & 
\infer[\pfa{\farr}]{\Ga\seq \All{x}\p(x)}{\Ga\seq\p(y)}\\[.15in]
\infer[\pfa{\elr}]{\Ga,\Exi{x}\f(x) \seq\De}{\Ga,\f(y)\seq\De} & & 
\infer[\pfa{\err}]{\Ga\seq \Exi{x}\p(x)}{\Ga\seq\p(u)}
\end{array}
\end{array}
\]}}
\caption{The Sequent Calculus $\fFLe$}
\label{f:fFLe}
\end{figure}

The following relationship between derivability of sequents in $\fFLe$ and (first-order) validity of equations in the variety $\cls{FL_e}$ is a direct consequence of the completeness of a cut-free sequent calculus for the first-order version of $\lgc{FL_e}$.

\begin{proposition}[cf.~\cite{OK85,Kom86}]\label{p:ono} 
For any sequent $\Ga\seq\De$ containing formulas from $\ofml$,
\begin{align*}
\textstyle\der{\fFLe}\Ga\seq\De \quad\Longleftrightarrow\quad\: \fosc{\cls{FL_e}}\prod\Ga\le\sum\De.
\end{align*}
\end{proposition}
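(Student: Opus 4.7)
The plan is to establish the two directions separately, leveraging the known cut-free completeness of the full first-order sequent calculus for $\lgc{FL_e}$~\cite{OK85,Kom86} for the non-trivial direction. The forward direction (soundness) proceeds by a routine induction on the derivation of $\Ga\seq\De$ in $\fFLe$, showing that each rule preserves the semantic consequence $\fosc{\cls{FL_e}}\prod\Ga\le\sum\De$. The propositional rules reduce to standard manipulations in commutative pointed residuated lattices (residuation for $\ilr/\irr$, monotonicity and preservation of meets and joins for the lattice rules, and so on); the quantifier rules require only that a meet, respectively join, over the domain of a model is a lower, respectively upper, bound for each instance, together with the fact that suprema and infima can be pulled past context multisets whose formulas do not mention the relevant eigenvariable.

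For completeness, suppose $\fosc{\cls{FL_e}}\prod\Ga\le\sum\De$ where every formula of $\Ga,\De$ lies in $\ofml$. By the cut-free completeness of the standard first-order sequent calculus for $\lgc{FL_e}$, there is a cut-free derivation of $\Ga\seq\De$ in that broader system, and it remains to transform it into a derivation in $\fFLe$. Because the calculus is cut-free, it enjoys the first-order subformula property: every formula appearing in the derivation is an instance of a subformula of the endsequent, obtained by substituting terms for the outermost bound variables. Since the endsequent contains only $\ofml$-formulas, every quantifier anywhere in the derivation is of the form $\All{x}$ or $\Exi{x}$, and since the signature contains no function symbols, the substituted terms are all variables.

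The main task, and the principal obstacle, is then to verify that the free variables introduced in the derivation can be chosen or renamed to belong to $\{x_i\}_{i\in\N}$ while respecting the side-conditions of the $\fFLe$ quantifier rules. For the $\farr$ and $\elr$ rules one may $\alpha$-rename each eigenvariable to a fresh member of $\{x_i\}_{i\in\N}$, maintaining freshness by processing the derivation from the root upward; for $\falr$ and $\err$ the variable $u$ may be picked either from the free variables already present in the conclusion or from $\{x_i\}_{i\in\N}$, as needed to satisfy side-condition (i). Crucially, no $x_i$ ever appears in the scope of a quantifier, since the only quantifiers in the derivation bind $x$ and no new quantifiers are introduced by cut-free rules. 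The resulting derivation thus uses only formulas in $\ofmls(\langs)$ and satisfies the rules of $\fFLe$, establishing the claim. The remaining work is essentially bookkeeping: carefully tracking the variables through the derivation and verifying the side-conditions after renaming, with no substantive new argument required beyond the cited completeness results.
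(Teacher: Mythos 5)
The paper offers no proof of this proposition at all, presenting it as a direct consequence of the cut-free completeness results of Ono and Komori, which is precisely the route you take; your elaboration (soundness by a routine induction, completeness via the subformula property of a cut-free first-order derivation of a one-variable endsequent, followed by renaming of free variables into $\{x_i\}_{i\in\N}$) is correct and consistent with the paper's intent. The one point deserving slightly more care than ``pick $u$ as needed'' is side-condition (i) on $\falr$ and $\err$: when the conclusion contains a free variable, $u$ is forced to be one of them, so an instantiation at a variable not free in the conclusion must be repaired by substituting that variable throughout the subderivation above it --- still routine bookkeeping, but a substitution lemma rather than a free choice.
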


We now establish an interpolation property for the calculus $\fFLe$. For any derivation $d$ of a sequent in $\fFLe$, let $\md(d)$ denote the maximum number of applications of the rules $\farr$ and $\elr$ that occur on a branch of $d$.

\begin{lemma}\label{l:interpolation}
If $d\der{\fFLe}\Ga(\bar{w},y),\Pi(\bar{w},z)\seq\De(\bar{w},z)$, with $y\neq z$ and $x\not\in\bar{w}\cup\{y,z\}$, then there exist $\x(\bar{w})\in\ofmls(\langs)$ and derivations $d_1,d_2$ in $\fFLe$ such that $\md(d_1),\md(d_2)\le\md(d)$ and
\begin{align*}
d_1\der{\fFLe}\Ga(\bar{w},y)\seq\x(\bar{w}), \quad d_2\der{\fFLe}\Pi(\bar{w},z),\x(\bar{w})\seq\De(\bar{w},z).
 \end{align*}
\end{lemma}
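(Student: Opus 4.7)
The plan is to proceed by induction on the height of $d$, performing a case analysis on the last inference rule applied and following the classical Maehara-style strategy for Craig interpolation in sequent calculi: for each rule I analyze how the principal formula(s) sit with respect to the split between the $y$-side ($\Ga$) and the $z$-side ($\Pi,\De$), and build the new interpolant from those supplied by the induction hypothesis on the premises.

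For the base cases (the axioms $\idr$, $\flr$, $\trr$) the interpolant is chosen explicitly. For an instance $\f\seq\f$ of $\idr$, if the antecedent copy of $\f$ lies in $\Pi$ I take $\x\coloneqq\ut$, with $d_1$ the axiom $\seq\ut$ and $d_2$ the result of applying $\tlr$ to $\f\seq\f$; if it lies in $\Ga$, then $\f$ must have free variables in $(\bar{w}\cup\{y\})\cap(\bar{w}\cup\{z\})=\bar{w}$, so $\x\coloneqq\f$ works. The axioms $\flr$ and $\trr$ are handled analogously.

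For the inductive step with a propositional/operation rule as last step, I apply the IH to each premise and combine the resulting interpolants using connectives of $\langs$: join ($\jn$) for rules that branch on the left (such as $\olr$), meet ($\mt$) for rules that branch on the right (such as $\arr$), residuation ($\to$) for the implication rules, and direct inheritance when the principal formula lies entirely on one side of the split. Since no fresh applications of $\farr$ or $\elr$ are introduced, the $\md$-bound is preserved automatically.

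The quantifier cases form the crux and the main obstacle of the proof. For an eigenvariable rule $\farr$ or $\elr$ as the last step, the fresh eigenvariable $y'$ of $d$ is temporarily absorbed into $\bar{w}$; the IH then yields an interpolant $\x'(\bar{w},y')$, and the final interpolant is formed as $\All{x}\x'(\bar{w},x)$ or $\Exi{x}\x'(\bar{w},x)$, depending on which side of the partition the principal quantifier sits. Rebuilding $d_1$ and $d_2$ requires precisely one new application of the corresponding eigenvariable rule, which is balanced by the one consumed from $d$, so $\md(d_1),\md(d_2)\le\md(d)$ is preserved. For the witness rules $\falr$ and $\err$, the easy sub-cases, where the witness already lies in $\bar{w}$ or on the side of its own principal formula, admit a direct application of the IH without any new eigenvariable step. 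The delicate sub-case, on which the technical weight of the proof rests, is when the witness is the variable that lives on the opposite side of the partition: here I plan to exploit uniform substitution in $\fFLe$ derivations (which does not affect $\md$) to rename the witness to a fresh variable, so that the reconstruction mirrors the eigenvariable case above and the newly introduced quantifier on the interpolant absorbs the opposite-side variable cleanly. Careful bookkeeping of $\md$ through each of these quantifier reconstructions is the central technical point of the argument.
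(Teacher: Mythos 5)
Your overall Maehara-style induction, your base cases, your propositional cases, and your treatment of the eigenvariable rules $\farr$ and $\elr$ (absorb the eigenvariable into $\bar{w}$, quantify the interpolant, pay for the one new eigenvariable application with the one consumed from $d$) all match the paper's proof. The gap is exactly where you locate the ``technical weight'': the subcase of the witness rules $\falr$ and $\err$ in which the witness is the variable of the opposite side, e.g.\ $\All{x}\f(x)\in\Ga$ with premise instance $\f(z)$. Your plan --- rename the witness to a fresh variable by uniform substitution and then mirror the eigenvariable case --- fails on two counts. First, the substitution cannot isolate the witness: the occurrences of $z$ in $\f(z)$ are the same variable as those in $\Pi(\bar{w},z)$ and $\De(\bar{w},z)$, and the derivation may genuinely identify them (say, via an axiom $P(z)\seq P(z)$ linking a subformula of $\f(z)$ with one of $\De$), so any uniform renaming changes the entire end-sequent rather than detaching the witness occurrences. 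Second, and decisively, a quantified interpolant $\All{x}\x''(\bar{w},x)$ or $\Exi{x}\x''(\bar{w},x)$ has to be introduced on the right of $d_1$ by $\farr$ or removed on the left of $d_2$ by $\elr$; but $\falr$ and $\err$ do not count towards $\md$, so in these subcases $\md(d')=\md(d)$ and there is no budget for a new eigenvariable application --- you would end up with $\md(d_i)=\md(d)+1$, destroying precisely the bound that the induction on $\tuple{\md(d),\height(d)}$ in Theorem~\ref{t:FLecomplete} depends on. (Worse, the needed $\elr$ step would often violate its own eigenvariable condition, since $z$ occurs free in $\Pi,\De$.)

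The fix the paper uses is propositional rather than quantificational. Because formulas of $\ofmls(\langs)$ admit no free variable inside the scope of a quantifier, the principal formula $\All{x}\f(x)$ (resp.\ $\Exi{x}\f(x)$) is a \emph{sentence} and may therefore appear in an interpolant over $\bar{w}$. One applies the induction hypothesis with the instantiated formula placed on the side of its witness variable, and then fuses the quantified principal formula into the interpolant with $\pd$ or $\to$: for example $\x\coloneqq\x'\cdot\All{x}\f(x)$ when $\All{x}\f(x)\in\Ga$ and the witness is $z$, $\x\coloneqq\All{x}\f(x)\to\x'$ when $\All{x}\f(x)\in\Pi$ and the witness is $y$, and $\x\coloneqq\x'\to\Exi{x}\f(x)$ for $\err$ with witness $y$. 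The derivations $d_1,d_2$ are then rebuilt using only $\idr$, $\pdrr$/$\pdlr$, $\irr$/$\ilr$, and the original witness rule, so no new $\farr$ or $\elr$ is introduced and the $\md$ bound survives. You need to replace your renaming idea with this fusion step for the lemma to go through.
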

\begin{proof}
We prove the claim by induction on the height of the derivation $d$ in $\fFLe$ of $\Ga(\bar{w},y),\Pi(\bar{w},z)\seq\De(\bar{w},z)$, considering in turn the last rule applied in the derivation. Note first that if $y$ does not occur in $\Ga$, we can define $\x(\bar{w}):=\prod\Ga$, and obtain a derivation $d_1$ of $\Ga(\bar{w},y)\seq\x(\bar{w})$, ending with repeated applications of $\pdrr$ and $\trr$, and a derivation $d_2$ of $\Pi(\bar{w},z),\x(\bar{w})\seq\De(\bar{w},z)$ extending $d$ with repeated applications of $\pdlr$ and $\tlr$, such that $\md(d_1)=0$ and $\md(d_2)\le\md(d)$. Similarly, if $z$ does not occur in $\Pi,\De$, we can define $\x(\bar{w}):=\prod\Pi\to\sum\De$, and obtain a derivation $d_1$ of $\Ga(\bar{w},y)\seq\x(\bar{w})$ that extends $d$ with repeated applications of $\pdlr$, $\tlr$, and $\frr$, followed by an application of $\irr$, and a derivation $d_2$ of $\Pi(\bar{w},z),\x(\bar{w})\seq\De(\bar{w},z)$ ending with repeated applications of $\pdrr$, $\trr$, and $\flr$, followed by an application of $\ilr$, such that $\md(d_1)=\md(d)$ and $\md(d_2)=0$.

For the base cases where $d$ ends with $\idr$, $\trr$, or $\flr$, either  $y$ does not occur in $\Ga$ or $z$ does not occur in $\Pi,\De$. For the remainder of the proof, let us assume without further comment that $y$ occurs in $\Ga$ and $z$ occurs in $\Pi,\De$. The cases where $d$ ends with an operational rule for one of the propositional connectives are all straightforward, so let us just consider $\ilr$ as an example.

Suppose first that $\Ga(\bar{w},y)$ is $\Ga_1(\bar{w},y),\Ga_2(\bar{w},y),\f(\bar{w},y)\to\p(\bar{w},y)$ and $\Pi(\bar{w},z)$ is $\Pi_1(\bar{w},z),\Pi_2(\bar{w},z)$, and 
\begin{align*}
d'_1&\der{\fFLe}\Ga_1(\bar{w},y),\Pi_1(\bar{w},z)\seq\f(\bar{w},y),\\
d'_2&\der{\fFLe}\Ga_2(\bar{w},y),\p(\bar{w},y),\Pi_2(\bar{w},z)\seq\De(\bar{w},z).
\end{align*}
Two applications of the induction hypothesis yield formulas $\x_1(\bar{w}),\x_2(\bar{w})$ and derivations $d'_{11},d'_{12},d'_{21},d'_{22}$ such that 
\begin{align*}
d'_{11}\der{\fFLe}\Ga_1(\bar{w},y),\x_1(\bar{w})\seq\f(\bar{w},y), &  \quad d'_{12}\der{\fFLe}\Pi_1(\bar{w},z)\seq\x_1(\bar{w}),\\
d'_{21}\der{\fFLe}\Ga_2(\bar{w},y),\p(\bar{w},y)\seq\x_2(\bar{w}), & \quad d'_{22}\der{\fFLe}\Pi_2(\bar{w},z),\x_2(\bar{w})\seq\De(\bar{w},z).
\end{align*} 
Let $\x(\bar{w}):=\x_1(\bar{w})\to\x_2(\bar{w})$. Then $d'_{11},d'_{21}$, together with applications of $\ilr$ and $\irr$, and $d'_{12},d'_{22}$, together with an application of $\ilr$, yield derivations $d_1$ and $d_2$, respectively,  such that
\begin{align*}
& d_{1}\der{\fFLe}\Ga_1(\bar{w},y),\Ga_2(\bar{w},y),\f(\bar{w},y)\to\p(\bar{w},y)\seq\x_1(\bar{w})\to\x_2(\bar{w}),\\
& d_{2}\der{\fFLe}\Pi_1(\bar{w},z),\Pi_2(\bar{w},z),\x_1(\bar{w})\to\x_2(\bar{w})\seq\De(\bar{w},z).
\end{align*}
Clearly, the constraints on $\md (d_1)$ and $\md(d_2)$ are satisfied.

Now suppose that $\Ga(\bar{w},y)$ and $\Pi(\bar{w},z)$ are of the form $\Ga_1(\bar{w},y),\Ga_2(\bar{w},y)$ and $\Pi_1(\bar{w},z),\Pi_2(\bar{w},z),\f(\bar{w},z)\to\p(\bar{w},z)$, respectively, and 
\begin{align*}
d'_1&\der{\fFLe}\Ga_1(\bar{w},y),\Pi_1(\bar{w},z)\seq\f(\bar{w},z),\\
d'_2&\der{\fFLe}\Ga_2(\bar{w},y),\Pi_2(\bar{w},z),\p(\bar{w},z)\seq\De(\bar{w},z).
\end{align*}
Two applications of the induction hypothesis yield formulas $\x_1(\bar{w}),\x_2(\bar{w})$ and derivations $d'_{11},d'_{12},d'_{21},d'_{22}$ such that 
\begin{align*}
d'_{11}\der{\fFLe}\Ga_1(\bar{w},y)\seq\x_1(\bar{w}), & \quad d'_{12}\der{\fFLe}\Pi_1(\bar{w},z),\x_1(\bar{w})\seq\f(\bar{w},z),\\
d'_{21}\der{\fFLe}\Ga_2(\bar{w},y)\seq\x_2(\bar{w}), & \quad d'_{22}\der{\fFLe}\Pi_2(\bar{w},z),\p(\bar{w},z),\x_2(\bar{w})\seq\De(\bar{w},z).
\end{align*} 
Let $\x(\bar{w}):=\x_1(\bar{w})\pd\x_2(\bar{w})$. Then $d'_{11},d'_{21}$, together with an application of $\pdrr$, and $d'_{12},d'_{22}$, together with applications of $\ilr$ and $\pdlr$, yield derivations $d_1$ and $d_2$, respectively,  such that
\begin{align*}
 & d_{1}\der{\fFLe}\Ga_1(\bar{w},y),\Ga_2(\bar{w},y)\seq\x_1(\bar{w})\pd\x_2(\bar{w}),\\
& d_{2}\der{\fFLe}\Pi_1(\bar{w},z),\Pi_2(\bar{w},z),\f(\bar{w},z)\to\p(\bar{w},z),\x_1(\bar{w})\pd\x_2(\bar{w})\seq\De(\bar{w},z).
 \end{align*}
Again, the constraints on $\md(d_1)$ and $\md(d_2)$ are clearly satisfied.

Next, we consider all cases where $d$ ends with an application of one of the quantifier rules.

\begin{enumerate}[font=\upshape, label={$\bullet$},  itemsep=1mm]

\item
$\falr$: Suppose first that $\Ga(\bar{w},y)$ is $\Ga'(\bar{w},y),\All{x}\f(x)$ and
\begin{align*}
d'\der{\fFLe}\Ga'(\bar{w},y),\f(u),\Pi(\bar{w},z)\seq\De(\bar{w},z),
\end{align*}
where $\md(d')=\md(d)$. For subcase (i), suppose that $u\in\bar{w}\cup\{y\}$. By the induction hypothesis, there exist a formula $\x(\bar{w})$ and derivations $d_1',d_2$ such that $\md(d_1'),\md(d_2)\leq\md(d')$ and
\begin{align*}
d_1'\der{\fFLe}\Ga'(\bar{w},y),\f(u)\seq\x(\bar{w}),\quad d_2\der{\fFLe}\Pi(\bar{w},z),\x(\bar{w})\seq\De(\bar{w},z).
\end{align*} 
Extending $d_1'$ with an application of $\falr$ yields a derivation $d_1$ such that $\md(d_1)=\md(d_1')\leq\md(d')=\md(d)$ and
\begin{align*}
d_1\der{\fFLe}\Ga'(\bar{w},y),\All{x}\f(x)\seq\x(\bar{w}).
\end{align*} 
For subcase (ii), suppose that $u=z$. By the induction hypothesis, there exists a formula $\x'(\bar{w})$ and derivations $d_1',d_2'$ such that $\md(d_1'),\md(d_2')\leq\md(d')$ and
\begin{align*}
d_1'\der{\fFLe}\Ga'(\bar{w},y)\seq\x'(\bar{w}), \quad
d_2'\der{\fFLe}\f(z),\Pi(\bar{w},z),\x'(\bar{w})\seq\De(\bar{w},z).
\end{align*} 
Let $\x(\bar{w})\coloneqq\x'(\bar{w})\cdot\All{x}\f(x)$. Combining  an instance $\All{x}\f(x)\seq\All{x}\f(x)$ of $\idr$ with $d_1'$ and an application of $\pdrr$ to $d_1'$ yields a derivation $d_1$ such that $\md(d_1)=\md(d_1')\leq\md(d')=\md(d)$ and
\begin{align*}
d_1\der{\fFLe}\Ga'(\bar{w},y),\All{x}\f(x)\seq\x'(\bar{w})\cdot\All{x}\f(x).
\end{align*} 
Also, $d_2'$ extended with applications of $\falr$ and $\pdlr$ yields a derivation $d_2$ such that $\md(d_2)=\md(d_2')\leq\md(d')=\md(d)$ and
\begin{align*}
d_2\der{\fFLe}\Pi(\bar{w},z),\x'(\bar{w})\cdot\All{x}\f(x)\seq\De(\bar{w},z).
\end{align*}
Now suppose that $\Pi(\bar{w},z)$ is $\Pi'(\bar{w},z),\All{x}\f(x)$ and
\begin{align*}
d'\der{\fFLe}\Ga(\bar{w},y),\Pi'(\bar{w},z),\f(u)\seq\De(\bar{w},z),
\end{align*}
where $\md(d')=\md(d)$. The case of $u\in\bar{w}\cup\{z\}$ is similar to subcase (i) above, so suppose $u=y$.  By the induction hypothesis, there exist a formula $\x'(\bar{w})$ and derivations $d_1',d_2'$ such that $\md(d_1'),\md(d_2')\leq\md(d')$ and
\begin{align*}
d_1'\der{\fFLe}\Ga(\bar{w},y),\f(y)\seq\x'(\bar{w}),\quad d_2'\der{\fFLe}\Pi'(\bar{w},z),\x'(\bar{w})\seq\De(\bar{w},z).
\end{align*}
Let $\x(\bar{w})\coloneqq\All{x}\f(x)\to\x'(\bar{w})$. Extending $d_1'$ with applications of $\falr$ and $\irr$ yields a derivation $d_1$ such that $\md(d_1)=\md(d_1')\leq\md(d')=\md(d)$ and
\begin{align*}
d_1\der{\fFLe}\Ga(\bar{w},y)\seq\All{x}\f(x)\to\x'(\bar{w}).
\end{align*}
Also, $d_2'$ and $\All{x}\f(x)\seq\All{x}\f(x)$ combined with an application of $\ilr$ yields a derivation $d_2$ such that $\md(d_2)=\md(d_2')\leq\md(d')=\md(d)$ and
\begin{align*}
d_2\der{\fFLe}\Pi'(\bar{w},z),\All{x}\f(x),\All{x}\f(x)\to\x'(\bar{w})\seq\De(\bar{w},z).
\end{align*}
\item $\farr$: Suppose that $\De(\bar{w},z)$ is $\All{x}\f(x)$ and for some variable $u$ that does not occur freely in $\Ga(\bar{w},y),\Pi(\bar{w},z)\seq\All{x}\f(x)$,
\begin{align*}
d'\der{\fFLe}\Ga(\bar{w},y),\Pi(\bar{w},z)\seq\f(u),
\end{align*}
where $\md(d')=\md(d)-1$.
By the induction hypothesis, there exist a formula $\x'(\bar{w},u)$ and derivations $d'_{1},d'_{2}$ such that $\md(d'_1),\md(d'_2)\le\md(d')$ and
\begin{align*}
d_1'\der{\fFLe}\Ga(\bar{w},y)\seq\x'(\bar{w},u),\quad d'_2\der{\fFLe}\Pi(\bar{w},z),\x'(\bar{w},u)\seq\f(u).
\end{align*}
Let $\x(\bar{w})\coloneqq\All{x}\x'(\bar{w},x)$. Extending $d_1'$ with an application of $\farr$ yields a derivation $d_1$ such that $\md(d_1)=\md(d_1')+1\leq\md(d')+1=\md(d)$ and
\begin{align*}
d_1\der{\fFLe}\Ga(\bar{w},y)\seq\x(\bar{w}).
\end{align*}
Also, extending $d_2'$ with applications of $\falr$ and $\farr$ yield a derivation $d_2$ such that $\md(d_2)=\md(d'_2)+1\le\md(d')+1=\md(d)$ and
\begin{align*}
d_2\der{\fFLe}\Pi(\bar{w},z),\x(\bar{w})\seq\All{x}\f(x).
\end{align*}

\item $\err$: Suppose that $\De(\bar{w},z)$ is $\Exi{x}\f(x)$ and
\begin{align*}
d'\der{\fFLe}\Ga(\bar{w},y),\Pi(\bar{w},z)\seq\f(u),
\end{align*}
where $\md(d')=\md(d)$. For subcase (i), suppose that $u\in\bar{w}\cup\{z\}$. By the induction hypothesis, there exist a formula $\x(\bar{w})$ and derivations $d_1,d_2'$ such that $\md(d_1),\md(d_2')\leq\md(d')$ and
\begin{align*}
\der{\fFLe}\Ga(\bar{w},y)\seq\x(\bar{w}),\quad d_2'\der{\fFLe}\Pi(\bar{w},z),\x(\bar{w})\seq\f(u).
\end{align*}
Extending $d_2'$ with an application of $\err$ yields a derivation $d_2$ such that $\md(d_2)=\md(d_2')\leq\md(d')$ and
\begin{align*}
d_2\der{\fFLe}\Pi(\bar{w},z),\x(\bar{w})\seq\Exi{x}\f(x).
\end{align*}
For subcase (ii), suppose that $u=y$. By the induction hypothesis, there exists a formula $\x'(\bar{w})$ and derivations $d_1',d_2'$ such that $\md(d_1'),\md(d_2')\leq\md(d')$ and
\begin{align*}
d_1'\der{\fFLe}\Pi(\bar{w},z)\seq\x'(\bar{w}),\quad d_2'\der{\fFLe}\Ga(\bar{w},y),\x'(\bar{w})\seq\f(y).
\end{align*}
Let $\x(\bar{w})\coloneqq\x'(\bar{w})\to\Exi{x}\f(x)$. Combining $d_2'$ with applications of $\err$ and $\irr$ yields a derivation $d_1$ such that $\md(d_1)=\md(d_2')\leq\md(d')=\md(d)$ and
\begin{align*}
d_1\der{\fFLe}\Ga(\bar{w},y)\seq\x'(\bar{w})\to\Exi{x}\f(x).
\end{align*}
Also, combining the instance $\Exi{x}\f(x)\seq\Exi{x}\f(x)$ of $\idr$ and $d_1'$ with $\ilr$ yields a derivation $d_2$ such that $\md(d_2)=\md(d_1')\leq\md(d')=\md(d)$ and
\begin{align*}
d_2\der{\fFLe}\Pi(\bar{w},z),\x'(\bar{w})\to\Exi{x}\f(x)\seq\Exi{x}\f(x).
\end{align*}

\item	$\elr$: Suppose first that $\Ga(\bar{w},y)$ is $\Ga'(\bar{w},y),\Exi{x}\f(x)$ and for some variable $u$ that does not occur freely in $\Ga(\bar{w},y),\Pi(\bar{w},z)\seq\De(\bar{w},z)$,
\begin{align*}
d'\der{\fFLe}\Ga'(\bar{w},y),\f(u),\Pi(\bar{w},z)\seq\De(\bar{w},z),
\end{align*}
where $\md(d')=\md(d)-1$. By the induction hypothesis, there exist a formula $\x'(\bar{w},u)$ and derivations $d_1',d_2'$ such that $\md(d_1'),\md(d_2')\leq\md(d')$ and
\begin{align*}
\qquad d_1'\der{\fFLe}\Ga'(\bar{w},y),\f(u)\seq\x'(\bar{w},u),\quad d_2'\der{\fFLe}\Pi(\bar{w},z),\x'(\bar{w},u)\seq\De(\bar{w},z).
\end{align*}
We define $\x(\bar{w})$ to be $\Exi{u}\x'(\bar{w},u)$. Then applications of $\err$ and $\elr$ to $d_1$ yield a derivation $d_1$ such that $\md(d_1)=\md(d_1')+1\leq\md(d')+1=\md(d)$ and
\begin{align*}
d_1\der{\fFLe}\Ga'(\bar{w},y),\Exi{x}\f(x)\seq\x(\bar{w}).
\end{align*}
An application of $\elr$ yields a derivation $d_2$ satisfying $\md(d_2)=\md(d_2')+1\leq\md(d')+1=\md(d)$ and
\begin{align*}
d_2\der{\fFLe}\Pi(\bar{w},z),\x(\bar{w})\seq\De(\bar{w},z).
\end{align*}
Now suppose $\Pi(\bar{w},z)$ is $\Pi'(\bar{w},z),\Exi{x}\f(x)$ and for some variable $u$ that does not occur freely in $\Ga(\bar{w},y),\Pi(\bar{w},z)\seq\De(\bar{w},z)$,
\begin{align*}
d'\der{\fFLe}\Ga(\bar{w},y),\Pi'(\bar{w},z),\f(u)\seq\De(\bar{w},z),
\end{align*}
where $\md(d')=\md(d)-1$. By the induction hypothesis, there exist a formula $\x'(\bar{w},u)$ and derivations $d_1',d_2'$ such that $\md(d_1'),\md(d_2')\leq\md(d')$ and
\begin{align*}
\quad d_1'\der{\fFLe}\Ga(\bar{w},y)\seq\x'(\bar{w},u),\quad d_2'\der{\fFLe}\Pi'(\bar{w},z),\f(u),\x(\bar{w},u)\seq\De(\bar{w},z).
\end{align*}
Let $\x(\bar{w})\coloneqq\All{x}\x'(\bar{w},x)$. The derivation $d_1'$ together with an application of $\farr$ yields a derivation $d_1$ satisfying $\md(d_1)=\md(d_1')+1\leq\md(d')+1=\md(d)$ and
\begin{align*}
d_1\der{\fFLe}\Ga(\bar{w},y)\seq\x(\bar{w}).
\end{align*}
Then $d_2'$ together with applications of $\falr$ and $\elr$ yields a derivation $d_2$ satisfying $\md(d_2)=\md(d_2')+1\leq\md(d')+1=\md(d)$ and
$$d_2\der{\fFLe}\Pi(\bar{w},y),\Exi{x}\f(x),\x(\bar{w})\seq\De(\bar{w},z).\eqno\qed$$
\end{enumerate}
\noqed\end{proof}

Using this lemma we can now reprove using proof-theoretic means the special case of Corollary~\ref{c:completeness} for the variety $\cls{FL_e}$.

\begin{theorem}\label{t:FLecomplete}
For any set $T\cup\{\f\eq\p\}$ of $\ofml(\langs)$-equations,
\begin{align*}
T\fosc{\cls{FL_e}}\f\eq\p \quad\Longleftrightarrow\quad T^\ast\mdl{\cls{mFL_e}}\f^\ast \eq\p^\ast.
\end{align*}
\end{theorem}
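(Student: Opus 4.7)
The soundness direction $(\Leftarrow)$ is immediate from Corollary~\ref{c:soundness2}, since $\cls{FL_e}$ is a variety and hence closed under subalgebras and direct powers; only the converse requires work.

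For the completeness direction $(\Rightarrow)$, my plan is to first reduce to a hypothesis-free sequent statement. Using the definable biconditional of $\cls{FL_e}$-algebras, and (if $T$ is infinite) compactness of first-order consequence, one can rephrase $T\fosc{\cls{FL_e}}\f\eq\p$ as an inequality $\fosc{\cls{FL_e}}\f'\le\p'$ between suitable $\f',\p'\in\ofml(\langs)$. By Proposition~\ref{p:ono}, this produces a derivation $d\der{\fFLe}\f'\seq\p'$ in the cut-free calculus $\fFLe$.

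The central step of the plan is then to prove, by induction on the height of a derivation, the following general claim: if $d'\der{\fFLe}\Ga\seq\De$ and $\Ga,\De$ consist entirely of formulas from $\ofml(\langs)$, then $\mdl{\cls{mFL_e}}(\prod\Ga)^\ast\le(\sum\De)^\ast$. The propositional axioms and rules translate directly via the $\cls{FL_e}$-equations built into the axiomatization of $\cls{mFL_e}$. Applications of $\falr$ or $\err$ with instantiating variable $u=x$ are handled using $\mathrm{(L1_\bo)}$ and $\mathrm{(L1_\di)}$ together with monotonicity. Applications of $\farr$ or $\elr$ whose eigenvariable is $x$ force, via the side-condition, that $x$ occurs in the antecedent only under modalities, and then the passage from the premise to the conclusion is exactly what the equations $(\ops_\bo)$ and $(\ops_\di)$ supply: any such modal formula is a fixed point of $\bo$ (and hence also, via $\mathrm{(L3_\di)}$, of $\di$), so the outer modality can be absorbed on whichever side of the sequent it needs to appear.

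The crucial difficulty, and where Lemma~\ref{l:interpolation} enters, is the remaining case: a quantifier rule whose instantiating or fresh variable differs from $x$. The premise of such a rule then lies in $\ofmls(\langs)\setminus\ofml(\langs)$, and the induction hypothesis does not apply to it directly. My plan is to rewrite any such subderivation using interpolation to eliminate the offending variable in favour of an $\ofml(\langs)$-witness of no greater modal degree, iterating on a well-founded measure controlled by $\md(d)$ together with the number of distinct extra variables appearing in $d$, until we arrive at a derivation of $\f'\seq\p'$ that uses only the variable $x$; this restricted derivation then translates rule-by-rule into the required $\cls{mFL_e}$-inequality. The main obstacle I anticipate is aligning the partition of sub-sequents prescribed by Lemma~\ref{l:interpolation} with the side-conditions of the quantifier rules in each configuration, and verifying that the iterative rewriting terminates with a genuinely one-variable derivation.
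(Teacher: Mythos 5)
Your proposal follows the paper's proof in all essentials: reduce to $T=\emptyset$ via compactness and the local deduction theorem, pass to a cut-free derivation via Proposition~\ref{p:ono}, translate propositional rules and the non-eigenvariable quantifier rules directly (using ${\rm (L1_\bo)}$, ${\rm (L1_\di)}$, and the $(\ops_\bo)$ equations), and invoke Lemma~\ref{l:interpolation} precisely at $\farr$ and $\elr$. Two details of your sketch need tightening, though. First, the endpoint should not be a derivation of $\f'\seq\p'$ ``that uses only the variable $x$'': such a derivation need not exist, since the eigenvariable condition on $\farr$ forbids instantiating with $x$ whenever $x$ occurs freely in the context. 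The paper instead proves the semantic claim directly by induction on the lexicographic pair $\tuple{\md(d),\height(d)}$; at a topmost $\farr$ with premise $\Ga\seq\p(z)$ one renames $x$ to a fresh $y$ in $\Ga$, interpolates, renames back, and applies the induction hypothesis to the two resulting derivations, which is legitimate because Lemma~\ref{l:interpolation} guarantees $\md(d_1),\md(d_2)\le\md(d')=\md(d)-1$ even though their heights may grow. Second, and crucially, the interpolant must be a \emph{sentence}, not merely an $\ofml(\langs)$-formula: it is the prior renaming of $x$ to $y$ in $\Ga$ that forces $\bar{w}=\emptyset$ in the lemma, so that $\x^\ast$ is a fixed point of $\bo$ (and of $\di$) in $\cls{mFL_e}$ and the step from $\x^\ast\le\p(x)^\ast$ to $\x^\ast\le(\All{x}\p(x))^\ast$ goes through. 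With those two corrections your argument is the paper's.
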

\begin{proof}
The right-to-left direction follows directly from Corollary~\ref{c:soundness2}.  For the converse, note first that due to compactness and the local deduction theorem for $\fosc{\V}$ (see~\cite[Sections~4.6, 4.8]{CN21}), we can restrict to the case where $T = \emptyset$. Hence, by  Proposition~\ref{p:ono}, it suffices to prove that for any sequent $\Ga\seq\De$ consisting only of formulas from $\ofml(\langs)$, 
\begin{align*}
\textstyle
d\der{\fFLe}\Ga\seq\De
\quad\Longrightarrow\quad
\mdl{\cls{mFL_e}}(\prod\Ga)^\ast \le (\sum\De)^\ast.
\end{align*}
We proceed by induction on the lexicographically ordered pair $\tuple{\md(d),\height(d)}$, where $\height(d)$ is the height of the derivation $d$. The base cases are clear and the cases for the last application of a rule in $d$ except $\farr$ and $\elr$ all follow by applying the induction hypothesis and the equations defining $\cls{mFL_e}$. Just note that for each such application, the premises contain only formulas from $\ofml(\langs)$ with at least one fewer symbol. In particular, for $\falr$ and $\err$, it can be assumed that the variable $u$ occurring in the premise is $x$ and the result follows using {\rm (L1$_\bo$)} or {\rm (L1$_\di$)}.

Suppose now that the last rule applied in $d$ is $\farr$, where $\De$ is $\All{x}\p(x)$ and $x$ may occur freely in $\Ga$. Then $d'\der{\fFLe}\Ga\seq\p(z)$ with $\md(d')=\md(d)-1$, where $z$ is a variable distinct from $x$. We write $\Ga(y)$ and $d'(y)$ to denote $\Ga$ and $d'$ with all free occurrences of $x$ replaced by $y$. Clearly, $d'(y)\der{\fFLe}\Ga(y)\seq\p(z)$ with $\md(d'(y))=\md(d')$. Hence, by Lemma~\ref{l:interpolation}, there exist a sentence $\x$ and derivations $d_1,d_2$ such that $\md(d_1),\md(d_2)\le\md(d')$ and 
\begin{align*}
d_1\der{\fFLe}\Ga(y)\seq\x,\quad d_2\der{\fFLe}\x\seq\p(z).
\end{align*}
Since $\x$ is a sentence and $x$ does not occur freely in $\Ga(y)$ or $\p(z)$, we can assume that $d_1$ and $d_2$ do not contain any free occurrences of $x$, and, substituting all occurrences of $y$ in $d_1$, and $z$ in $d_2$, by $x$, obtain derivations $d'_1$ of $\Ga\seq\x$ and $d'_2$ of $\x\seq\p(x)$ with  $\md(d'_1)=\md(d_1)$ and $\md(d'_2)=\md(d_2)$. Hence, by the induction hypothesis twice, $\mdl{\cls{mFL_e}}(\prod\Ga)^\ast \le \x^\ast$ and $\mdl{\cls{mFL_e}} \x^\ast \le\p(x)^\ast$. Since $\x$ is a sentence,  the equations defining $\cls{mFL_e}$ yield also $\mdl{\cls{mFL_e}}\x^\ast \le (\All{x}\p(x))^\ast$. So  $\mdl{\cls{mFL_e}}(\prod\Ga)^\ast\le(\All{x}\p(x))^\ast$.

Suppose finally that the last rule applied in $d$ is $\elr$, where $\Ga$ is $\Pi,\Exi{x}\p(x)$ and $x$ may occur freely in $\Pi$ and $\De$. Then $d'\der{\fFLe}\Pi,\p(y)\seq\De$ with $\md(d')=\md(d)-1$, where $y$ is a variable distinct from $x$. We write $\Pi(z)$, $\De(z)$, and $d'(z)$ to denote $\Pi$, $\De$, and $d'$ with all free occurrences of $x$ replaced by $z$. Clearly, $d'(z)\der{\fFLe}\Pi(z),\p(y)\seq\De(z)$ with $\md(d'(z))=\md(d')$.  By Lemma~\ref{l:interpolation}, there exist a sentence $\x$ and derivations $d_1,d_2$ such that $\md(d_1),\md(d_2)\le\md(d')$ and
\begin{align*}
d_1\der{\fFLe}\p(y)\seq\x, \quad d_2\der{\fFLe}\Pi(z),\x\seq\De(z).
\end{align*}
Since $\x$ is a sentence and $x$ does not occur freely in $\p(y)$, $\Pi(z)$, or $\De(z)$, we can assume that $d_1$ and $d_2$ do not contain any free occurrences of $x$, and, substituting all occurrences of $y$ in $d_1$, and $z$ in $d_2$, by $x$, obtain derivations $d'_1$ of $\p(x)\seq\x$ and $d'_2$ of $\Pi,\x\seq\De$ with  $\md(d'_1)=\md(d_1)$ and $\md(d'_2)=\md(d_2)$. Hence, by the induction hypothesis, $\mdl{\cls{mFL_e}} \p(x)^\ast \le\x^\ast$ and $\mdl{\cls{mFL_e}} (\prod(\Ga',\x))^\ast \le(\sum\De)^\ast$. Since $\x$ is a sentence, the equations defining $\cls{mFL_e}$ yield also $\mdl{\cls{mFL_e}} (\Exi{x}\p(x))^\ast \le\x^\ast$. So $\mdl{\cls{mFL_e}} (\prod(\Pi,\Exi{x}\p(x)))^\ast  \le(\sum\De)^\ast$. 
\end{proof}

The proof-theoretic strategy described above extends easily to varieties of $\cls{FL_e}$-algebras axiomatized relative to $\cls{FL_e}$ by equations of a certain simple form. Given a variable $x$, let $x^0:=\ut$ and $x^{k+1}:=x\pd x^k$, for each $k\in\N$, and given a multiset $\Pi$ and $k\in\N$, let $\Pi^k$ denote the multiset union of $k$ copies of $\Pi$.  Now let $S$ be the set of equations $\{x\leq x^k\mid k\in\N\}\cup\{\zr\leq x\}$, and define sequent rules
\begin{align*}
r(x\leq x^k)=\vcenter{\infer[]{\Ga_1,\Pi,\Ga_2\seq\De}{\Ga_1,\Pi^k,\Ga_2\seq\De}}
\quad\text{and}\quad
r(\zr\leq x)=\vcenter{\infer[]{\Ga\seq\De}{\Ga\seq}}.
\end{align*}
Given any $S'\subseteq S$, denote by $\cls{FL_e}+S'$ the variety of $\cls{FL_e}$-algebras axiomatized relative to $\cls{FL_e}$ by the equations in $S'$, and by $\fFLe+r(S')$ the sequent calculus $\fFLe$ extended with the rules $r(\ep)$ for each equation $\ep$ in $S'$. Then for any sequent $\Ga\seq\De$ containing formulas from $\ofml$ (see,~e.g.,~\cite{OK85,Kom86}), 
\begin{align*}
\textstyle\der{\fFLe+r(S')}\Ga\seq\De \quad\Longleftrightarrow\quad\: \fosc{\cls{FL_{e}}+S'}\prod\Ga\le\sum\De.
\end{align*}
Moreover, the additional cases required to adapt the proof of Lemma~\ref{l:interpolation} to $\fFLe+r(S')$ are straightforward, since each application of a rule $r(\ep)$ for $\ep\in S'$ has just one premise. Hence, following the proof of Theorem~\ref{t:FLecomplete} yields the following more general result.

\begin{theorem}
For any $S'\subseteq S$ and set $T\cup\{\f\eq\p\}$ of $\ofml(\lang)$-equations,
\begin{align*}
T\fosc{\cls{FL_e}+S'}\f\eq\p \quad\Longleftrightarrow\quad T^\ast\mdl{\cls{mFL_e}+S'}\f^\ast \eq\p^\ast.
\end{align*}
\end{theorem}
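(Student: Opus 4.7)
The plan is to follow closely the proof of Theorem~\ref{t:FLecomplete}, adapting both Lemma~\ref{l:interpolation} and the main induction to the enlarged cut-free calculus $\fFLe+r(S')$. Soundness (the $\Longleftarrow$ direction) follows from Corollary~\ref{c:soundness2}: since each equation in $S'$ is a one-variable $\langs$-equation valid in $\cls{FL_e}+S'$, it holds pointwise in every $(\cls{FL_e}+S')$-functional m-$\langs$-lattice, and hence the $\cls{mFL_e}+S'$-semantics captures $\fosc{\cls{FL_e}+S'}$-consequence on $\ofml(\langs)$-equations.

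For the converse, I would first reduce to $T=\emptyset$ by compactness and the local deduction theorem of~\cite[Sections~4.6, 4.8]{CN21}, and then invoke the cited cut-free completeness of $\fFLe+r(S')$ for $\fosc{\cls{FL_e}+S'}$ to reduce the claim to showing that, for any sequent $\Ga\seq\De$ of $\ofml(\langs)$-formulas, $d\der{\fFLe+r(S')}\Ga\seq\De$ implies $\mdl{\cls{mFL_e}+S'}(\prod\Ga)^\ast\le(\sum\De)^\ast$. The induction on $\tuple{\md(d),\height(d)}$ proceeds exactly as for Theorem~\ref{t:FLecomplete}, with a new case added for each rule $r(\ep)$ with $\ep\in S'$. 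For $r(x\le x^k)$, the induction hypothesis applied to the premise yields $(\prod\Ga_1)^\ast\cdot((\prod\Pi)^\ast)^k\cdot(\prod\Ga_2)^\ast\le(\sum\De)^\ast$ in $\cls{mFL_e}+S'$, and combining $x\le x^k$ with monotonicity of $\cdot$ gives the inequality for the conclusion. For $r(\zr\le x)$, the premise gives $(\prod\Ga)^\ast\le\zr^\ast$, and then $\zr\le x$ delivers $(\prod\Ga)^\ast\le(\sum\De)^\ast$.

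The key technical step is extending Lemma~\ref{l:interpolation} to $\fFLe+r(S')$. Each new rule has a single premise, introduces no new free variables or quantifiers, and does not increase $\md$, so the bound $\md(d_i)\le\md(d)$ is preserved automatically. The case for $r(\zr\le x)$ is straightforward: the induction hypothesis applied to the premise (whose succedent is empty) yields an interpolant $\x(\bar w)$ together with derivations of $\Ga(\bar w,y)\seq\x(\bar w)$ and $\Pi(\bar w,z),\x(\bar w)\seq{}$, and a single application of $r(\zr\le x)$ to the latter gives the required derivation with succedent $\De(\bar w,z)$. The main obstacle I anticipate is the case for $r(x\le x^k)$, since the distinguished multiset $\Pi$ may straddle the interpolation boundary; however, because all $k$ copies of $\Pi$ in the premise are identical, they split uniformly into a $y$-part $\Pi^y$ and a $z$-part $\Pi^z$, so the induction hypothesis produces derivations of $\Ga_1^y,(\Pi^y)^k,\Ga_2^y\seq\x(\bar w)$ and $\Ga_1^z,(\Pi^z)^k,\Ga_2^z,\x(\bar w)\seq\De$ that can be independently extended by a single application of $r(x\le x^k)$ on each side, completing the interpolation step.
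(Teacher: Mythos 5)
Your proposal is correct and follows essentially the same route as the paper, which itself only sketches this extension: it observes that the new single-premise rules $r(\ep)$ add straightforward cases to Lemma~\ref{l:interpolation} (your uniform splitting of the contracted multiset across the interpolation boundary is exactly the point that makes these cases go through) and then reruns the induction of Theorem~\ref{t:FLecomplete}, with soundness supplied by Corollary~\ref{c:soundness2}. In fact you supply more detail than the paper does on the $r(x\le x^k)$ interpolation case, and that detail is accurate.
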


\noindent
In particular, we obtain new completeness proofs for the axiomatizations of the one-variable fragments of the first-order extensions of $\lgc{FL_{ew}}$, $\lgc{FL_{ec}}$, and $\lgc{FL_{ewc}}$ (intuitionistic logic).

%%%%%%%%%%%%%%%%%%%%%%%%%%%%%%%%%%%%%%%%%%%%%%%%%%%%%%%%

\section{Concluding remarks}\label{s:concluding}

Let us conclude this paper by mentioning some interesting directions for further research. The most general challenge for a class $\K$ of $\lang$-lattices may be stated as follows: provide a (natural) axiomatization of the equational consequence relation $\fosc{\K}$, or, equivalently, in algebraic terms, provide a  (natural) axiomatization of the generalized quasivariety generated by the class of all $\tuple{\alg{A},W}$-functional m-$\lang$-lattices where $\alg{A}\in\K$ and $W$ is any set. In this paper, we have shown that when $\K$ is a variety of $\lang$-lattices that has the superamalgamation property, the required generalized quasivariety is the variety $\mK$ of m-$\lang$-lattices  (Corollary~\ref{c:completeness}), axiomatized relative to $\K$ by a set of axioms familiar from modal logic. However, if $\K$ lacks the superamalgamation property or is not a variety, further axioms may be required.

One potential generalization is to consider varieties of $\lang$-lattices that have the weaker ``super generalized amalgamation property'', which corresponds for substructural logics (even those without exchange) to the Craig interpolation property~\cite{GJKO07}. In particular, such a result would yield an axiomatization for the one-variable fragment of the first-order version of the full Lambek Calculus $\lgc{FL}$, although we conjecture that completeness would hold only for valid equations and not consequences. Alternatively, such a generalization might be established proof-theoretically for first-order versions of substructural logics like $\lgc{FL}$ that have a cut-free sequent calculus, by lifting the proof-theoretic strategy presented in Section~\ref{s:prooftheory} to sequents based on sequences of formulas.

A further interesting line of inquiry concerns the case where $\K$ consists of the totally ordered members of a variety of $\lang$-lattices, and hence forms a positive universal class. First, let $\V$ be any variety of {\em semilinear} $\lgc{FL_e}$-algebras: algebras that are isomorphic to a subdirect product of totally ordered $\lgc{FL_e}$-algebras. It is not hard to show that in this case, $\fosc{\V}\Exi{x}\f\pd\Exi{x}\f \eq \Exi{x}(\f\pd \f)$. However, if $\textbf{\L}_3\in\V$ (e.g., if $\V$ is $\cls{MV}$ or the variety of all semilinear $\lgc{FL_e}$-algebras), then (as proved in Example~\ref{e:monadicvarieties}), $\nmdl{\cls{mV}}\di x\pd\di x \eq \di(x\pd x)$, so $\mV$ does not correspond to the one-variable fragment of the first-order logic based on $\V$.    

Now let $\cls{V_{to}}$ be the class of totally ordered members of $\V$. Then not only $\fosc{\cls{V_{to}}}\Exi{x}\f\pd\Exi{x}\f \eq \Exi{x}(\f\pd \f)$, but also $\fosc{\cls{V_{to}}}\All{x}(\f\jn\p)\eq\All{x}\f\jn\p$, where $x$ does not occur in $\p$. Although a general approach to obtaining axiomatizations of the one-variable fragments of the first-order logics based on $\V$ and $\cls{V_{to}}$ is lacking, success for specific cases indicate a possible way forward. Most notably, the one-variable fragment of first-order {\L}ukasiewicz logic can be defined over the class $\cls{MV_{to}}$ of totally ordered MV-algebras and corresponds to the  variety of monadic MV-algebras, defined relative to $\cls{mMV}$ by $\di x\pd\di x \eq \di(x\pd x)$ and $\bo(\bo x\jn y)\eq\bo x\jn\bo y$~\cite{Rut59}. Interestingly, a proof of this latter result is given in~\cite{CCVR20} using the fact that  $\cls{MV_{to}}$ has the amalgamation property (see also~\cite{MT20,Tuy21} for related results), suggesting that the approach developed in this paper might  be adapted to one-variable fragments of first-order logics based on classes of totally ordered algebras that have the amalgamation property.

%%%%%%%%%%%%%%%%%%%%%%%%%%%%%%%%%%%%%%%%%%

\bibliographystyle{asl}

%%%%%%%%%%%%%%%%%%%%%%%%%%%%%%%%%%%%%%%%%%

\end{document}